\numberwithin{equation}{section}
\theoremstyle{plain}
\newtheorem{Th}{Theorem}[section]
\newtheorem{Lemma}[Th]{Lemma}
\newtheorem{Cor}[Th]{Corollary}
\newtheorem{Prop}[Th]{Proposition}
 \theoremstyle{definition}
\newtheorem{Def}[Th]{Definition}
\newtheorem{Rem}[Th]{Remark}
\newtheorem{?}[Th]{Problem}
\newtheorem{Ex}[Th]{Example}
\begin{document}

\title{A quotient of Fomin-Kirillov algebra and $Q$-Lucas polynomial}

\author[S. Homayouni]{Sirous Homayouni}

\address{Sirous Homayouni,  Department of Mathematics and Statistics, York University, Toronto, Canada}
\email{chomayou@mathstat.yorku.ca, cyhomayou@gmail.com}

 \subjclass[2010]{Primary: Mathematics. Secondary: Algebraic Combinatorics}

 \keywords{Lucas polynomial, Gr\"{o}bner basis, ideal, quotient algebra, Hilbert series, n-cycle graph}

\begin{abstract}We 
	introduce a quotient of Fomin-Kirillov algebra $FK(n)$ denoted $\overline{FK}_{C_n}(n)$, over the ideal generated by the edges of a complete graph on $n$ vertexes that are missing in the $n$-cycle graph $C_n$. 
	For this quotient algebra $\overline{FK}_{C_n}(n)$,	
	we show that the basis is in one-to-one correspondence with the set of matchings in an $n$-cycle graph. We also prove that the dimension of $\overline{FK}_{C_n}(n)$ equals the Lucas Number $L_n$ and its Hilbert series is $q$-Lucas polynomial.	
   We find the character map of this quotient algebra over Dihedral group $D_n$. 
\end{abstract}
\begin{raggedright}
\maketitle
\section{Motivation and Preliminaries}
Fomin-Kirillov algebra $FK(n)$ is a quadratic non-commutative algebra on generators $x_{ij}=-x_{ji}$, where $1\leq i<j\leq n$, 
that satisfy the following relations \cite{fomin}.
\begin{equation}\label{rel}
\begin{split}
(i) : x_{ij}^2=0,~1\leq i < j \leq n;\\
(ii) : x_{ij}x_{kl} - x_{kl}x_{ij} = 0;~\text{for distinct}~ i, j, k, l~ \text{such that}~1\leq i, j, k<l \leq n;\\
(iii) : x_{ij}x_{jk} - x_{jk}x_{ik} - x_{ik}x_{ij} = 0; ~\text{if}~ 1\leq i < j < k\leq n;\\
(iii^\prime) : x_{ij}x_{ik} - x_{jk}x_{ij} + x_{ik}x_{jk} = 0; ~\text{if}~ 1\leq i < j < k\leq n.
\end{split}
\end{equation}
If $F\langle x_{ij}\rangle$ is the free associated algebra generated by $x_{ij},~1\leq i<j\leq n$, then $FK(n)$ is the quotient of $F\langle x_{ij}\rangle$ over the ideal generated by the relations in \eqref{rel}. 

While the motivation of Fomin and Kirillov for introducing this algebra was calculation of the structure constant of Schubert polynomials, however this algebra received later much attention in algebraic and combinatorial aspects \cite{CHRISTOPH}, \cite{Karola}, \cite{liu}, \cite{fomin''}, \cite{bazlov'},
\cite{kirillov}, \cite{kirillov2}.\\
The analysis of the sub-algebras of $FK(n)$, made a better understanding of this algebra by showing that a striking amount of their structure parallels Coxeter groups and nil-Coxeter algebras \cite{Karola}.

Despite many research works on different aspects of this algebra, still there are many questions about this algebra yet to be answered including the question of dimensionality; while the dimension of $FK(n)$ is known to be finite for $n=3, 4, 5$, however it is not even known if it is finite or not for $n\geq 6$, \cite{CHRISTOPH}, \cite{Karola}.

In this paper, to better understand the structure of $FK(n)$ we study a quotient of it denoted  $\overline{FK}_{C_n}(n)$ associated with the subgraph $n$-cycle $C_n$ of the complete graph on $n$ vertexes; while $FK(n)$ is associated to the complete graph on $n$ vertexes, $\overline{FK}_{C_n}(n)$ is associated with the subgraph $n$-cycle $C_n$  of the complete graph on $n$ vertexes. We find a beautiful connection between this quotient algebra and the theory of Lucas polynomials. Specifically we find that the Hilbert series of this quotient algebra is the $q$-Lucas polynomial, and its dimension equals the Lucas number, As well the symmetry properties of this quotient algebra under $D_n$ is related to $q$-Fibonacci polynomials.
\subsection{Gr\"obner basis} For the purposes of this paper, the generating set for the defining ideal of the algebra is Gr\"obner basis. 
 From non-commutative Buchberger theory \cite{mora}, the reduced Gr\"obner basis is unique up to monomial ordering. 
 
 The monomial ordering
we use in this work is called graded lexicographic ordering (glex) defined as follows.
\begin{Def}\label{d1}	For monomials $M_1$ and $M_2$ we say 
	\begin{displaymath}
	M_1<_{glex}M_2~ \text{if}~\begin{cases}
	deg M_1<deg M_2, ~\text{or}\\
	deg M_1=deg M_2~\text{and}~M_1<_{lex}M_2,
	\end{cases}
	\end{displaymath}
	where 
	the lexicographic ordering (lex) of monomials that we use in this work
	is defined by first introducing a variable ordering by
	\begin{displaymath}
	x_{ij} > x_{kl}~ \text{if}~
	\begin{cases}
	j < l, ~\text{or}\\
	j = l ~\text{and}~ i > k,\end{cases}
	\end{displaymath}
	then with this variable ordering, the following rule completes the
	definition of our lexicographic monomial ordering. For monomials $M_1$ and $M_2$ of the same usual degree $d$,~ $M_1 <_{lex} M_2$ if the first variable of $M_1$ is less than the first variable of $M_2$.
	if the first $k$ variables happen to be the same, then compare the $k+1$ st variables.
\end{Def}
\section{A quotient algebra of $FK(n)$}\label{qalg}
We introduce the algebra $\overline{FK}_{C_n}(n)$ as the quotient of $FK(n)$ over the ideal generated by the edges of the complete graph on $n$ vertexes that are missing in subgraph $n$-cycle $C_n$.
\begin{displaymath}\begin{split}
\overline{FK}_{C_n}(n)=&
\frac{FK(n)}{I\langle\text{missing edges in subgraph}~ C_n\rangle}
\\=&
\frac{F\langle x_{ij}\rangle}{I\langle \{\text{generators of the defining ideal of $FK(n)$}\}\cup \{\text{missing edges in}~C_ n\}\rangle}.
\end{split}
\end{displaymath}
In other words by putting the missing edges in \eqref{rel} equal to zero we come up with the new set of relations \eqref{2terms} associated to $\overline{FK}_{C_n}(n)$ as in the following definition. 
\begin{Def}\label{def-cnalg}
	For $n>3$, we define $\overline{FK}_{C_n}(n)$, the quotient algebra of $FK(n)$, as the algebra on generators $\{x_{1n}=-x_{n1},~x_{m,m+1}=-x_{m+1,m}, 1\leq m\leq  n-1\}$, 
	that satisfies the following relations (Leading terms are underlined).
	\begin{equation}\label{2terms}
	\begin{split}
	R_{C_n}=&\{ x_{m,m+1}^2=0,~1\leq m\leq n-1,~
	x_{1,n}^2=0,
	\\&
	x_{m,m+1}x_{m+1, m+2}=0,
	~x_{m+1,m+2}x_{m,m+1}=0,~ 1\leq m \leq n-2,\\&
	x_{n-1,n}x_{1n}=0,~x_{1n}x_{n-1,n}=0,~x_{12}x_{1n}=0,~x_{1n}x_{12}=0,\\&\underline{x_{m,m+1}x_{l,l+1}}-x_{l,l+1}x_{m,m+1}=0, ~1\leq m\leq l-2,~ 3\leq l\leq n-1,\\&
	\underline{x_{m,m+1}x_{1,n}}-x_{1n}x_{m,m+1}=0,~ 2\leq m\leq n-2\}.
	\end{split}
	\end{equation}
\end{Def}	


From the relations in \eqref{2terms} we have the following set of generators for the defining ideal of $\overline{FK}_{C_n}(n)$.
\begin{equation}\label{2terms''}
\begin{split}
&\text{Generators of defining ideal of}~ \overline{FK}_{C_n}(n)=\\&\{ x_{m,m+1}^2,~1\leq m\leq n-1,~
x_{1,n}^2,
\\&
x_{m,m+1}x_{m+1, m+2},
~x_{m+1,m+2}x_{m,m+1},~ 1\leq m \leq n-2,\\&
x_{n-1,n}x_{1n},~x_{1n}x_{n-1,n},~x_{12}x_{1n},~x_{1n}x_{12},\\&\underline{x_{m,m+1}x_{l,l+1}}-x_{l,l+1}x_{m,m+1}, ~1\leq m\leq l-2,~ 3\leq l\leq n-1,\\&
\underline{x_{m,m+1}x_{1,n}}-x_{1n}x_{m,m+1},~ 2\leq m\leq n-2\}.
\end{split}
\end{equation}
\begin{Rem}
	As mentioned above while the generators of $FK(n)$ are the edges of a complete graph on $n$ vertexes, the generators of $\overline{FK}_{C_n}(n)$ are the edges of the subgraph $C_n$ of the complete graph on $n$ vertexes, i.e., $\{x_{12}, x_{23},\cdots, x_{n-1,n},~ x_{1n}\}$. Therefore while $S_n$ defines an action on $FK(n)$, as the defining ideal of $FK(n)$ is stable under $S_n$, however $S_n$ does not do so on $\overline{FK}_{C_n}(n)$, as the defining ideal of $\overline{FK}_{C_n}(n)$ generated by the terms in \eqref{2terms''} is not stable under $S_n$ (for example $x_{1n}^2$ in \eqref{2terms''}, under transposition $(12)\in S_n$ goes to $x_{2n}^2$ which is not in the set). 
	
	However, as we will see later, the ideal is stable under Dihedral group $D_n$, so $D_n$ defines an action on $\overline{FK}_{C_n}(n)$ (an action is defined on a quotient if and only if the defining ideal is stable under the action).\end{Rem}
\subsection{Dihedral group} Dihedral group is defined by
\begin{displaymath}
D_n=\langle r, s|s^2=1, r^n=1, (rs)^2=1\rangle.
\end{displaymath} 
We realize group $D_n$ in the permutation group $S_n$, where
\begin{equation}\label{r,s}\begin{split}
 r=(12\cdots n)~\text{and}~s&=\begin{cases}
(1, n)(2, n-1)\cdots (\frac{n}{2}, \frac{n}{2}+1), & \text{even}~n,\\
(1, n)(2, n-1)\cdots (\frac{n-1}{2}, \frac{n+3}{2})(\frac{n+1}{2}), & \text{odd}~n, 
\end{cases}
\end{split}\end{equation}  
are permutations on $n$ vertexes (rotation and reflection in $C_n$, see Figure \ref{rs}).
	\begin{figure}
				\begin{tikzpicture}
		\path[draw] (0,0) node[left] (6) {6}
		-- (-0.7,-1) node[below] (5) {5}
		-- (0, -2) node[below] (4) {4}
		-- (1.5,-2) node[right] (3) {3}
		-- (2.1, -1) node[above] (2) {2}
		-- (1.4,0) node[above] (1) {1}
		-- cycle;
		\path[draw] (.75,0) node[left] () {}
		-- (.75,-2) node[right] () {}
		-- cycle;~~~~
		\end{tikzpicture}
		\begin{tikzpicture}
		\path[draw] (-0.08,0) node[left] (5) {5}
		-- (-.5,-1.2) node[below] (4) {4}
		-- (0.7, -2) node[below] (3) {3}
		-- (2,-1.2) node[right] (2) {2}
		-- (1.5, 0) node[above] (1) {1}
		-- cycle;
		\path[draw] (.7,0) node[below] () {}
		-- (.7,-2) node[below] () {}
		-- cycle;
		\end{tikzpicture}
		\caption{Examples of reflections $s$ in an $n$-cycle for even/odd $n$:
			$s=(16)(25)(34)$ in a $6$-cycle and $s=(15)(24)(3)$ in a $5$-cycle.} 
		\label{rs}
	\end{figure}
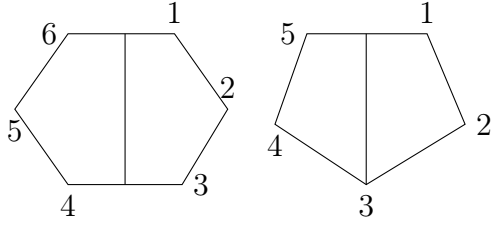
\subsection{The stability of the defining ideal of $\overline{FK}_{C_n}(n)$ under $D_n$}
 It is easily shown that the set of generators of the defining ideal of $\overline{FK}_{C_n}(n)$ in \eqref{2terms''} is invariant under $D_n$. So the defining ideal of $\overline{FK}_{C_n}(n)$ is stable under $D_n$. Therefore $D_n$ defines an action on $\overline{FK}_{C_n}(n)$, as it follows.

\subsection{The action of $D_n$ on $\overline{FK}_{C_n}(n)$}We define the action of $D_n$ realized in permutation group on $\overline{FK}_{C_n}(n)$ as \begin{equation}\label{d_n-def}
D_n:\overline{FK}_{C_n}(n)\to \overline{FK}_{C_n}(n) ~\text{by}~ \sigma(M+I)\mapsto \sigma M+I,
\end{equation}where the action of $\sigma\in D_n$ on a monomial $M$ is defined via the action on a variable $x_{ij},~j=i+1$ (as $x_{ij}$ represents an edge of an $n$-cycle), 
defined by 
\begin{equation}\label{sigmax}
\sigma x_{ij}=\begin{cases}
x_{\sigma(i)\sigma(j)}~&\text{if}~ \sigma(i)<\sigma(j),\\
-x_{\sigma(j)\sigma(i)}~&\text{otherwise},
\end{cases}
\end{equation}
and multiplicative extension of it to monomial $M$. 

The algebra is closed under $D_n$ as the defining ideal is stable under $D_n$. Regarding the action defined in \eqref{d_n-def}, we need to check the following items.
\begin{enumerate}
	\item Well defined: 
	\begin{displaymath}\begin{split}
	&M+I= M'+I\to M-M'\in I\to \sigma(M-M')\in I \text{ (as $I$ is stable under $D_n$)}.\\ &\text{Then}~\sigma M-\sigma M'\in I \to \sigma M+I=\sigma M'+I\to \sigma(M+I)=\sigma(M'+I).
	\end{split}\end{displaymath}
	\item Identity axiom: $\epsilon(f(x_{i_1j_1},\cdots,x_{i_kj_k})+I)=f(x_{i_1j_1},\cdots,x_{i_kj_k})+I$,
	\item Compatibility axiom:
	\begin{displaymath}\begin{split}
	(\sigma_1\sigma_2)(f(x_{i_1j_1},\cdots,x_{i_kj_k})+I)=&f((\sigma_1\sigma_2)x_{i_1j_1},\cdots, (\sigma_1\sigma_2)x_{i_kj_k})+I\\=&f(x_{(\sigma_1\sigma_2)i_1(\sigma_1\sigma_2)j_1},\cdots, x_{(\sigma_1\sigma_2)i_k(\sigma_1\sigma_2)j_k})+I\\=&f(x_{\sigma_1(\sigma_2i_1)\sigma_1(\sigma_2j_1)},\cdots, x_{\sigma_1(\sigma_2i_k)\sigma_1(\sigma_2j_k})+I\\=&\sigma_1f(x_{\sigma_2i_1\sigma_2j_1},\cdots, x_{\sigma_2i_k\sigma_2j_k})+I\\=
	&
	\sigma_1(\sigma_2f(x_{i_1j_1},\cdots, x_{i_kj_k}))+I.
	\end{split}\end{displaymath}\end{enumerate}	
Hence the map in \eqref{d_n-def}, defines an action on $\overline{FK}_{C_n}(n).$
\subsection{Representation decompositions of $\overline{FK}_{C_n}(n)$}\begin{itemize}
	\item (Usual degree decomposition) The action of $D_n$ on $\overline{FK}_{C_n}(n)$ defined in \eqref{d_n-def} permutes the indexes among themselves but does not change the number of variables in a monomial. Therefore usual degree remains invariant under $D_n$, so usual degree representation decomposition makes sense for $\overline{FK}_{C_n}(n)$ (i.e., $D_n$ respects decomposition in usual degree).
	\begin{equation}\label{oplus1}
	\overline{FK}_{C_n}(n)=\bigoplus_{d\geq 0} \overline{FK}_{C_n}^{(d)}(n). 
	\end{equation}
	\item (Conjugacy class decomposition) $S_n$-degree is well defined on $\overline{FK}_{C_n}(n)$, since the generators of the defining ideal in \eqref{2terms''} are homogeneous with respect to $S_n$. The action of $D_n$ on $\overline{FK}_{C_n}(n)$ defined in \eqref{d_n-def} does not change the cycle type of the $S_n$-degrees assigned to elements of $\overline{FK}_{C_n}(n)$, because a monomial of degree $\sigma$, when acted upon by a permutation $\nu$ is sent to a monomial of degree $\nu^{-1}\sigma\nu$ which is a conjugate of $\sigma$ by definition of conjugacy. So permutation $\nu$ sends $\sigma$ to a conjugate of $\sigma$, i.e., conjugacy class is invariant under $D_n$ realized in permutation group. Hence the conjugacy class decomposition is a representation decomposition.
	\begin{equation}\label{oplus}
	\overline{FK}_{C_n}(n)=\bigoplus_{\mu} \overline{FK}_{C_n}^{\mu}(n), 
	\end{equation}where $\overline{FK}_{C_n}^{\mu}(n)$ stands for all the elements of $S_n$-degree $\sigma$ in $\overline{FK}_{C_n}(n)$ that belong to conjugacy class indexed by $\mu$.
	\begin{Rem}\label{s_n-deg'}
		Every permutation $\sigma$ can be obtained using only transpositions $(1,2)$,~$(2,3),\cdots (n-1,n)$, so $\overline{FK}_{C_n}^{\sigma}(n)$ is potentially not zero for any permutation $\sigma$. However for our special cases we will see in sections \ref{chareven} and \ref{charodd} that for many of permutations it is zero.
		
		Having an $S_n$-conjugacy class, it is not true in general that it is a $D_n$-conjugacy class as well. For example transpositions $(2,3)$ and $(2,5)$ in the same $S_n$-conjugacy class for $n>5$, are not conjugated via an element in $D_n$.  However we will see in section \ref{b'} from our very special basis \eqref{b} that any two element in an $S_n$ conjugacy class can be conjugated via a rotation or reflection in $D_n$ realized in permutation group. I.e., an $S_n$ conjugacy class is a $D_n$ conjugacy class as well.
		 It is why \eqref{oplus} can not be refined. 
		
	\end{Rem}
	\item (Set partition type decomposition) Set-partition degree is well defined on $\overline{FK}_{C_n}(n)$, since the generators of the defining ideal in \eqref{2terms''} are homogeneous with respect to set-partition degree.	
	For any monomial $M$ in $\overline{FK}_{C_n}(n)$, the appearance of $x_{ij}$ in $M$ implies that $i,j$ are in the same part of the set-partition degree. Any $x_{ij}$ appearing in $M$, under the action of $\sigma \in D_n$ (by definition in \eqref{sigmax}) goes to $x_{\sigma(i)\sigma(j)}$ if $\sigma(i)<\sigma(j)$ ~and to ~$-x_{\sigma(j)\sigma(i)}$~otherwise, so as a result of the action, now we have the indexes $\sigma(i)$ and $\sigma(j)$ in the same part. This means that while the action of $D_n$ changes the indexes in a part but does not change the number of them in that part, i.e., set partition type is invariant under the action of $D_n$ realized in permutation group. Hence partition in terms of set partition type is a $D_n$ representation decomposition. 
	\begin{equation}\label{oplus1}
	\overline{FK}_{C_n}(n)=\bigoplus_{\phi} \overline{FK}_{C_n}^{\phi}(n), 
	\end{equation}
	where $\overline{FK}_{C_n}^{\phi}(n)$ stands for all set partition degree $\lambda$ elements that belong to the same set partition type indexed by $\phi$. 
\end{itemize}
\begin{Rem}
	Unlike permutations, discussed in Remark \ref{s_n-deg'}, not every set-partition can be obtained by joining only consecutive entries modulo $n$. For example it is not possible to obtain the set-partition $\{\{1,3\}, \{2\}, \{4\}\}$. Therefore for some set-partition $A$, the space $\overline{FK}_{C_n}^{A}(n)$ is zero. We will see later in section \ref{b'} from our very special basis \eqref{b} that for any two non-zero  $\overline{FK}_{C_n}^{A}(n)$ and $\overline{FK}_{C_n}^{B}(n)$ such that $A$ and $B$ have same partition type, we can find an element of $D_n$ that maps $A$ into $B$. Therefore a class of the same set partition type can be a $D_n$ conjugacy class as well. Hence  \eqref{oplus1} can not be refined.
\end{Rem}
\subsection{Gr\"obner basis}
 From the non-commutative Buchberger theory \cite{mora}, the set of generators of the defining ideal of $\overline{FK}_{C_n}(n)$ in \eqref{2terms'''} forms the degree $2$ elements of the Gr\"obner basis. To find the degree $3$ terms, we need to reduce the $S$-polynomials between any two pair of the degree $2$ terms with respect to lower degree terms.
 
We notice that the $S$-polynomial between two monomials is always zero. Moreover, the only $S$-polynomials that reduce to non-zero elements for Gr\"obner basis are \begin{displaymath}
S(\underline{x_{m,m+1}x_{1n}}-x_{1n}x_{m,m+1}, x_{1n}x_{12}, 3), ~\text{and}~ S(x_{1,n}x_{12}, x_{m,m+1}x_{l,l+1}-x_{l,l+1}x_{m,m+1}, 3).
\end{displaymath}
The first of the above however, reduces to
\begin{displaymath}
A=\{ x_{1n}x_{m,m+1}x_{12},~ 3\leq m\leq n-2\}.
\end{displaymath} The range of $m$ refers to the fact that $m$ has to be more than $2$ otherwise it is divisible by the relation $x_{m+1,m+2}x_{m,m+1}=0,~ 1\leq m \leq n-2$ for $m=1$.
Also the second one reduces to  
\begin{displaymath}
B=\{x_{1n}x_{l,l+1}x_{12},~ 3\leq l\leq n-2\}.
\end{displaymath} Here the range refers to the fact that $l$ has to be at most $n-2$ for $x_{1n}x_{l,l+1}x_{12}$ not to be divisible by relation $x_{1n}x_{n-1,n}=0$. 

However the sets $A$ and $B$ are the same. Since the set of elements of degree $3$ in the set of relations of $\overline{FK}_{C_n}(n)$ is empty, the elements $x_{1n}x_{l,l+1}x_{12},3\leq m\leq n-2$ are the whole elements of Gr\"obner basis of degree $3$. Therefore we have
 \begin{equation}\label{3terms}
\text{degree 3 terms}: \{x_{1n}x_{m,m+1}x_{12}: ~ 3\leq m\leq n-2\}.
\end{equation} 
\begin{Ex}
	For $n=4$, we have no degree $3$ terms. For $n=5$, the only degree $3$ term of the Gr\"obner basis is  $x_{15}x_{34}x_{12}$. For $n=6$, we have two degree $3$ terms: $x_{16}x_{34}x_{12}$ and $x_{16}x_{45}x_{12}$.
\end{Ex}
To find the degree $4$ terms, the only $S$-polynomial is (as the rest of them goes zero)
\begin{displaymath}
S(\underline{x_{m_2,m_2+1}x_{1n}}-x_{1n}x_{m_2,m_2+1},x_{1n}x_{m_1,m_1+1}x_{12},4),
\end{displaymath}  
 which reduce to
 \begin{displaymath}
 x_{1n}x_{m_2,m_2+1}x_{m_1,m_1+1}x_{12},
 \end{displaymath}  with respect to terms of degree less than $4$. However by the relations in Equation \eqref{2terms} these terms goes zero unless $ m_2-m_1\geq 2$, $m_1\geq 3$ and $m_2\leq n-2$. 
 These are the only degree $4$ terms as the set of elements of degree $4$ in the set of relations of $\overline{FK}_{C_n}(n)$ is empty. 
 Therefore we have
\begin{equation}\label{4terms}\text{degree $4$ terms}: \{x_{1n}x_{m_2,m_2+1}x_{m_1,m_1+1}x_{12}:~ m_2-m_1\geq 2,~m_1\geq 3 ~\text{and} ~m_2\leq n-2\}.
\end{equation} 
The forms of the degrees $3$ and $4$ terms in Equations \eqref{3terms} and \eqref{4terms} suggest a general form for the elements of the basis of $\overline{FK}_{C_n}(n)$ of degree $k$, in the following proposition.
\begin{Prop}\label{gbdegk}
	For $k\geq 4$ any degree $k$ element of Gr\"obner basis for the ideal associated to $\overline{FK}_{C_n}(n)$ is of the form 	
	\begin{equation}\label{kterms}
	\begin{split}
	&x_{1n}x_{m_{k-2}m_{k-2}+1}x_{m_{k-3}m_{k-3}+1} \cdots x_{m_2m_2+1}x_{m_1m_1+1}x_{12},~ \text{where},\\& m_{i}-m_{i-1}\geq 2,~i=2, 3, \cdots, k-2, ~\text{and}~ ~m_1\geq 3 ~\text{and} ~m_{k-2}\leq n-2.
	\end{split}
	\end{equation}
\end{Prop}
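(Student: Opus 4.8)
The plan is to prove Proposition~\ref{gbdegk} by induction on $k$, using the non-commutative Buchberger criterion in exactly the same style as the degree $3$ and degree $4$ computations already carried out. The base cases $k=3$ and $k=4$ are established in \eqref{3terms} and \eqref{4terms}, so assume the claim holds for all degrees up to $k-1$, with the degree $k-1$ Gr\"obner basis elements being precisely the monomials
\[
x_{1n}x_{m_{k-3}m_{k-3}+1}\cdots x_{m_1m_1+1}x_{12},\qquad m_i-m_{i-1}\ge 2,\ m_1\ge 3,\ m_{k-3}\le n-2.
\]
First I would observe, as in the text, that the $S$-polynomial of two monomial relations is always zero, so the only new overlaps to consider are between a degree $2$ binomial relation and a degree $(k-1)$ monomial from the induction hypothesis. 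Among the binomials, the relation $\underline{x_{m,m+1}x_{l,l+1}}-x_{l,l+1}x_{m,m+1}$ has leading monomial $x_{m,m+1}x_{l,l+1}$ with $m<l-1$; this never overlaps the front of a degree $(k-1)$ basis monomial, whose first two letters are $x_{1n}x_{m_{k-3}m_{k-3}+1}$ with second index $1$ (here $x_{1n}$ is the smallest edge under glex and does not appear as an interior $x_{p,p+1}$ with $p\ge 2$), nor its tail $x_{m_1m_1+1}x_{12}$. So the only productive overlap is with the leading monomial $x_{m,m+1}x_{1n}$ of the binomial $\underline{x_{m_2,m_2+1}x_{1n}}-x_{1n}x_{m_2,m_2+1}$ against the leading letter $x_{1n}$ of the degree $(k-1)$ monomial, giving exactly the $S$-polynomial displayed before \eqref{4terms}, now with a longer tail:
\[
S\bigl(\underline{x_{m_{k-2},m_{k-2}+1}x_{1n}}-x_{1n}x_{m_{k-2},m_{k-2}+1},\ x_{1n}x_{m_{k-3}m_{k-3}+1}\cdots x_{m_1m_1+1}x_{12},\ k\bigr).
\]

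Next I would carry out the reduction of this $S$-polynomial modulo the lower-degree Gr\"obner basis, exactly mirroring the $k=4$ case: the binomial side contributes $x_{m_{k-2},m_{k-2}+1}x_{1n}x_{m_{k-3}m_{k-3}+1}\cdots x_{12}$, which the binomial relation rewrites as $x_{1n}x_{m_{k-2},m_{k-2}+1}x_{m_{k-3}m_{k-3}+1}\cdots x_{m_1m_1+1}x_{12}$ (up to lower-degree corrections that themselves reduce to zero by induction since all degree $<k$ ambiguities are already resolved). Now I invoke the degree $2$ monomial relations in \eqref{2terms} to decide when the resulting monomial is already reducible: if $m_{k-2}-m_{k-3}\le 1$ it is killed by one of $x_{m,m+1}^2$, $x_{m,m+1}x_{m+1,m+2}$, or $x_{m+1,m+2}x_{m,m+1}$; if $m_{k-2}\ge n-1$ it is divisible by $x_{1n}x_{n-1,n}$; and the remaining front/tail constraints $m_1\ge 3$ and the separation $m_i-m_{i-1}\ge 2$ are inherited from the induction hypothesis. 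Hence the new monomial survives reduction precisely when it has the form \eqref{kterms}, and since the set of degree $k$ relations of $\overline{FK}_{C_n}(n)$ is empty, these surviving monomials are exactly the new Gr\"obner basis elements of degree $k$. It also has to be checked that the overlaps \emph{among} the new degree $k$ monomials and the old ones produce no further elements: these are monomial--monomial overlaps, whose $S$-polynomial is zero, so nothing new appears at degree $k+1$ from this source beyond what the binomial overlaps already give, closing the induction.

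The main obstacle, and the place I would be most careful, is the bookkeeping in the reduction step: one must verify that all the ``lower-degree correction terms'' generated when the binomial relation $x_{m_{k-2},m_{k-2}+1}x_{1n}\to x_{1n}x_{m_{k-2},m_{k-2}+1}$ is applied inside a long word do in fact reduce to zero, which relies on the fact that every ambiguity of degree $<k$ has already been resolved by the induction hypothesis (this is the standard ``confluence propagates'' argument underlying Buchberger's criterion / Bergman's diamond lemma). A secondary subtlety is confirming that no other pair of degree $2$ binomials overlaps a degree $(k-1)$ basis monomial in a way I have not listed --- in particular ruling out overlaps at the $x_{12}$ tail with $x_{12}x_{1n}$ or $x_{1,2}^2$, and at interior letters $x_{m_im_i+1}$ with the commutation binomials --- but each of these either has $S$-polynomial zero (monomial case) or produces a word already divisible by a listed monomial relation, so it contributes nothing. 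Once these checks are in place, the form \eqref{kterms} follows by induction.
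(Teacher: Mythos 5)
Your proposal is correct and follows essentially the same route as the paper's proof: induction on the degree using \eqref{4terms} as the base, identifying the unique productive overlap between the binomial $\underline{x_{m,m+1}x_{1n}}-x_{1n}x_{m,m+1}$ and the leading letter $x_{1n}$ of the lower-degree Gr\"obner monomial, and then a case analysis on the adjacent pair $x_{m_{k-2},m_{k-2}+1}x_{m_{k-3},m_{k-3}+1}$ to see that the reduced $S$-polynomial survives exactly when the separation and boundary conditions of \eqref{kterms} hold. Your additional remarks on ruling out tail overlaps at $x_{12}$ and on the confluence of lower-degree ambiguities are consistent with (and slightly more explicit than) the paper's argument.
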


\begin{proof}
	Base of induction: Equation \eqref{4terms} serves as the base of induction.\\  
	Inductive step: Let the statement be valid for degree $k$ elements of the Gr\"obner basis. We need to show that it is valid for degree $k+1$, i.e., we need to show that for degree $k+1$ we will have 
	\begin{equation}\label{kterms''}
	\begin{split}
	x_{1n}x_{m_{k-1}m_{k-1}+1}x_{m_{k-2}m_{k-2}+1}x_{m_{k-3}m_{k-3}+1} \cdots x_{m_2m_2+1}x_{m_1m_1+1}x_{12},~\text{where}\\m_i-m_{i-1}\geq 2,~i=2, 3, \cdots, k-1, ~\text{and}~ ~m_1\geq 3 ~\text{and} ~m_{k-1}\leq n-2.
	\end{split}
	\end{equation}
	The only $S$-polynomials available between the degree $k$ element in the statement and a lower degree term that reduce to degree $k+1$ elements of the Gr\"obner basis is (as the rest of them go zero) \begin{equation}\label{1}\begin{split}S(x_{m_{k-1}m_{k-1}+1}x_{1n}-x_{1n}x_{m, m+1},~  x_{1n}x_{m_{k-2}m_{k-2}+1}x_{m_{k-3}m_{k-3}+1} \cdots x_{m_1m_1+1}x_{12},~ k+1)\\=x_{1n}\underline{x_{m_{k-1}m_{k-1}+1}x_{m_{k-2}m_{k-2}+1}} \cdots x_{m_1m_1+1}x_{12}.
	\end{split}\end{equation} 
	We now focus on the underlined product on the right hand side of \eqref{1}:
	 \begin{displaymath}
	C=\underline{x_{m_{k-1},m_{k-1}+1}x_{m_{k-2},m_{k-2}+1}}.
	\end{displaymath} For $C$ not to vanish or reduce to something else on reduction w.r.t. any of the degree $2$ terms in Equation \eqref{2terms}, we need to avoid the following cases.
	\begin{enumerate}
		\item $m_{k-1}=m_{k-2}$ where $C\to 0$ by relation $x_{m,m+1}^2=0$.
		\item  $m_{k-1}=m_{k-2}-1$ where $C\to 0$ by relation $x_{m,m+1}x_{m+1,m+2}=0$.
		\item $m_{k-1}\leq m_{k-2}-2$ where $C\to x_{m_{k-2}m_{k-2}+1}x_{m_{k-1}m_{k-1}+1}$. (by reduction w.r.t. the leading term of the relation $\underline{x_{m,m+1}x_{l,l+1}}-x_{l,l+1}x_{m,m+1}=0)$
		\item $m_{k-1}\leq m_{k-2}+1$ where $C\to 0  $ by relation $x_{m+1,m+2}x_{m,m+1}=0$.
	\end{enumerate}
	To avoid the above cases we need that inequality $n-2\geq m_{k-1}>m_{k-2}+1$ holds, which is nothing but $m_{k-1}-m_{k-2}\geq 2$ and $n-2\geq m_{k-1}$, as well we need $m_1\geq 3$, for $x_{m_1m_1+1}x_{12}$ not to go zero by $x_{m+1,m+2}x_{m,m+1}=0, ~1\leq m \leq n-2$. 
	
	However these three inequalities
	\begin{displaymath}
 m_{k-1}-m_{k-2}\geq 2,~ n-2\geq m_{k-1},~ \text{and}~m_1\geq 3,
 \end{displaymath}  
	meet the requirements of Equation \eqref{kterms''} for validity for degree $k+1$. This completes the proof.
\end{proof}	 
Putting together degree $k$ terms for $k\geq 4$ in Equation \eqref{kterms}, degree $3$ term of Equation \eqref{3terms} and degree $2$ terms in Equation \eqref{2terms} we get the set of Gr\"obner basis for the ideal associated with $\overline{FK}_{C_n}(n)$:
\begin{equation}\label{gb'}
\begin{split}
GB_{C_n}=&\{x_{m,m+1}^2:~1\leq m\leq n-1; ~x_{1,n}^2,\\&
x_{m,m+1}x_{m+1, m+2},
~x_{m+1,m+2}x_{m,m+1}, ~1\leq m \leq n-2\\&x_{n-1,n}x_{1n}, ~x_{1n}x_{n-1,n},~x_{12}x_{1n},~x_{1n}x_{12}\\&x_{m,m+1}x_{l,l+1}-x_{l,l+1}x_{m,m+1}, ~1\leq m\leq l-2,~ 3\leq l\leq n-1\\&
x_{m,m+1}x_{1,n}-x_{1n}x_{m,m+1},~ 2\leq m\leq n-2,\\&
x_{1n}x_{m,m+1}x_{12}, ~ 3\leq m\leq n-2,~\text{and}\\&
x_{1n}x_{m_{k-2}m_{k-2}+1}x_{m_{k-3}m_{k-3}+1} \cdots x_{m_2m_2+1}x_{m_1m_1+1}x_{12},\\&\text{where}~ m_i-m_{i-1}\geq 2,~i=2, 3, \cdots, k-1, ~\text{and}~ ~m_1\geq 3 ~\text{and} ~m_{k-1}\leq n-2\}.
\end{split}
\end{equation}
\begin{Ex}
	For $n=3$ we have
	\begin{displaymath}
	GB=\{x_{12}^2,x_{23}^2,x_{13}^2, x_{12}x_{23}, x_{23}x_{13},
	x_{12}x_{13}, x_{23}x_{12}, x_{13}x_{23}, x_{13}x_{12}\}.
	\end{displaymath}		 
\end{Ex}
\subsection{Basis and dimension}\label{b'}
\begin{Def}\label{matching''}\begin{enumerate}
\item A matching in a graph is a subset of the set of edges in the graph wherein no two edges have a common vertex.
\item A matching on a set of line segments $V_n=\{\overline{12},\overline{23},\cdots \overline{n-1,n}\}$ is a subset of $V_n$ wherein no two line segments have a common vertex. 
\end{enumerate}\end{Def} 
\begin{Ex}
	In an square with successive sides/edges $a, b, c, d$, we have $7$ sets of sides/edges where no two of them have a common vertex:
	\begin{displaymath}
	 \phi, \{a\}, \{b\}, \{c\}, \{d\}, \{a,c\}, \{b,d\}.
	 \end{displaymath} So the number of matchings in a square in $7$.\end{Ex}
\begin{center}	
	\begin{tikzpicture}
	\draw (4cm,1em) -- (5.5cm,1em) node[midway,above] {a};
	\draw (5.5cm,1em) -- (5.5cm,-2em) node[midway,right] {b};
	\draw (5.5cm,-2em) -- (4cm,-2em) node[midway, below] {c};
	\draw (4cm,-2em) -- (4cm,1em) node[midway, left] {d};
	\end{tikzpicture}
\end{center}

\begin{Th}\label{lucas}
	\begin{enumerate}
		\item There is a one-to-one correspondence between the basis of  $\overline{FK}_{C_n}(n)$ and the set of matchings in an $n$-cycle. 
		\item Dimensions of $\overline{FK}_{C_n}(n)$ equals the number of matchings in an $n$-cycle $C_n$.
	\end{enumerate}
\end{Th}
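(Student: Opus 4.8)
The key is that the Gröbner basis $GB_{C_n}$ in \eqref{gb'} gives, via the standard theory, a linear basis for $\overline{FK}_{C_n}(n)$ consisting of the monomials that are \emph{not divisible by any leading term} of an element of $GB_{C_n}$ (the normal monomials). So the plan is to identify these normal monomials explicitly and then exhibit a bijection with matchings of $C_n$. First I would read off the leading terms: from the degree-$2$ part we must avoid $x_{m,m+1}^2$, $x_{1n}^2$, $x_{m,m+1}x_{m+1,m+2}$, $x_{m+1,m+2}x_{m,m+1}$, $x_{n-1,n}x_{1n}$, $x_{1n}x_{n-1,n}$, $x_{12}x_{1n}$, $x_{1n}x_{12}$ and the leading monomials $x_{m,m+1}x_{l,l+1}$ (for $m\le l-2$) and $x_{m,m+1}x_{1n}$; and from higher degree the monomials of the form in \eqref{kterms}. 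Avoiding the degree-$2$ leading terms forces any normal monomial to have its variables appearing in strictly increasing ``first-index'' order, with indices differing by at least $2$ (so the corresponding edges of $C_n$ are pairwise non-adjacent), and with $x_{1n}$, if present, sitting at the very front and requiring the absence of both $x_{12}$ and $x_{n-1,n}$ — i.e. exactly the adjacency constraint for a matching of the cycle $C_n$.

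Concretely, I claim the normal monomials are precisely
\[
x_{1n}^{\varepsilon}\,x_{m_r,m_r+1}x_{m_{r-1},m_{r-1}+1}\cdots x_{m_1,m_1+1},
\]
with $\varepsilon\in\{0,1\}$, $1\le m_1<m_2<\cdots<m_r\le n-1$, consecutive $m_i$ differing by at least $2$, and if $\varepsilon=1$ then additionally $m_1\ge 3$ (no $x_{12}$) and $m_r\le n-2$ (no $x_{n-1,n}$) — together with the empty monomial $1$. The forward direction (any normal monomial has this shape) is a short combinatorial argument: no letter repeats (else $x_{m,m+1}^2$ divides it), any two path-edges are non-adjacent and written in increasing order (else one of the four forbidden degree-$2$ words or a leading term $x_{m,m+1}x_{l,l+1}$ divides it), and $x_{1n}$ behaves as described because $x_{m,m+1}x_{1n}$, $x_{n-1,n}x_{1n}$, $x_{1n}x_{n-1,n}$, $x_{12}x_{1n}$, $x_{1n}x_{12}$ are all leading terms. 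For the converse I must also check such monomials avoid the higher-degree leading terms \eqref{kterms}: a monomial of shape above is divisible by one of those only if it both begins with $x_{1n}$ and ends with $x_{12}$, which is excluded by $m_1\ge 3$; a brief induction using Proposition~\ref{gbdegk} handles this uniformly.

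The bijection is then immediate: send such a normal monomial to the edge set $\{\overline{m_1,m_1+1},\dots,\overline{m_r,m_r+1}\}\cup(\varepsilon\text{-many copies of }\overline{1n})$ of $C_n$; the pairwise-non-adjacency conditions are exactly ``no two edges share a vertex'', so this is a matching, and the empty monomial goes to the empty matching. Conversely any matching of $C_n$, listing its non-$\overline{1n}$ edges in increasing order of lower endpoint and prepending $x_{1n}$ iff $\overline{1n}$ is in the matching, recovers a unique normal monomial; the adjacency constraints of a matching translate back precisely into the index conditions (gap $\ge 2$, and $\overline{1n}$ in the matching forces $\overline{12},\overline{n-1,n}$ out, i.e. $m_1\ge 3$, $m_r\le n-2$). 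This proves part~(1), and part~(2) follows since a Gröbner-basis normal-form argument shows these normal monomials form a vector-space basis, so $\dim\overline{FK}_{C_n}(n)$ equals their number, i.e. the number of matchings of $C_n$.

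The main obstacle is the bookkeeping in the forward direction — proving rigorously that \emph{every} monomial surviving reduction has the claimed canonical shape, in particular that $x_{1n}$ is forced to the front and cannot coexist with $x_{12}$ or $x_{n-1,n}$, and that no subword matches a higher-degree leading term from \eqref{kterms}. Everything else (the bijection itself, and invoking the Gröbner-basis/normal-monomial theorem of \cite{mora}) is routine.
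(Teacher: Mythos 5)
Your proposal follows essentially the same route as the paper: read off the normal monomials (words not divisible by any leading term of the Gr\"obner basis \eqref{gb'}), observe that the degree-$2$ and higher-degree leading terms force exactly the ``no shared vertex'' conditions, and biject these normal monomials with matchings of $C_n$; the dimension claim then follows from the standard normal-form theorem. One index needs correcting: when $\varepsilon=1$ the condition should be $m_1\ge 2$, not $m_1\ge 3$ --- your own parenthetical ``no $x_{12}$'' is the right requirement (the edge $\overline{12}$ shares vertex $1$ with $\overline{1n}$), and it translates to $m_1\ge 2$; as written, $m_1\ge 3$ would wrongly exclude normal monomials such as $x_{1n}x_{23}$ (the matching $\{\overline{1n},\overline{23}\}$), and the count would no longer equal the number of matchings. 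The bound $m_1\ge 3$ belongs to the higher-degree \emph{leading terms} in \eqref{kterms} (which additionally end in $x_{12}$), not to the normal monomials; with $m_1\ge 2$ your divisibility argument for the higher-degree terms still goes through, since such a monomial never ends in $x_{12}$. This matches the paper's basis \eqref{b}, where the last index in the $x_{1n}$-headed words is required to be $\ge 2$.
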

\begin{proof}
	The basis of the algebra $\overline{FK}_{C_n}(n)$ is the set of all the words on generators\\ $\{x_{12},x_{23},x_{34},\cdots,
	x_{n-1,n}; x_{1,n}\}$ that are not divisible by any leading term of the elements of the Gr\"obner basis in Equation \eqref{gb'}.
	Therefore 
	\begin{itemize}
		\item The degrees $0$ and $1$ elements of  are just $1$ and the generators $x_{1n}$ and $x_{m,m+1},~m=1, \cdots, n-1$.
	\end{itemize}
However for the higher degree terms of $\overline{FK}_{C_n}(n)$, the structure of the degree $2$ terms and the higher degree terms in Gr\"obner basis in Equation \eqref{gb'} is such that: 
	\begin{enumerate}		
		\item Any monomial in our generators vanish by degree $2$ terms in Gr\"obner basis if it has two adjacent $1$st indexes differing by $\pm{1}$ or $0$. 
		\item Any monomial in our generators vanish if two adjacent $1$st indexes differ by $\geq 2$ but are increasing in $1$st indexes. 
		\item If any two adjacent $2$nd indexes in a monomial are equal, then either that common $2$nd index is $n$, that the monomial vanish by $x_{n-1,n}x_{1n}$ or by $x_{1n}x_{n-1,n}$, otherwise if the common index is less than $n$, then so are the $1$st indexes equal as well (as the generators are $x_{m,m+1}$) and so vanishes by $x_{ij}x_{ij}$. 
	\end{enumerate}
	By the above items, the basis elements  of $\overline{FK}_{C_n}(n)$ consist of monomials with decreasing $1$st indexes of variables (with an exception of $x_{1n}$ when appears as the leading variable) such that any two successive $1$st index differ by at least $2$.
	
	Following the above points we come up with $B(\overline{FK}_{C_n}(n))$, the basis of $\overline{FK}_{C_n}(n)$.
	\begin{equation}\label{b}
	\begin{split}
	B&(\overline{FK}_{C_n}(n))=\{1,~x_{1n},~x_{m,m+1},~\text{where}~ 1\leq m \leq n-1,\\&
	x_{1n}x_{m_1m_1+1},~~\text{where}~ 2\leq m_1 \leq n-2,\\&x_{m_1m_1+1}x_{m_2m_2+1}, ~\text{where}~m_1-m_2\geq 2,~m_2\geq1,~m_1\leq n-1,
	\\&x_{m_1m_1+1}x_{m_2m_2+1}\cdots x_{m_km_k+1},~\text{where}~
	m_i-m_{i+1}\geq 2,~m_k\geq 1,~m_1\leq n-1,
	\\&
	x_{1n}x_{m_1m_1+1}x_{m_2m_2+1} \cdots x_{m_{k-1}m_{k-1}+1}, ~\text{where}~
	m_i-m_{i+1}\geq 2,~ m_{k-1}\geq 2,\\&m_1\leq n-2
	\}.
	\end{split}
	\end{equation}
	The following argument shows why $B[\overline{FK}_{C_n}(n)]$ consists of all the matchings in $C_n$.\\
	In the elements of the basis of $\overline{FK}_{C_n}(n)$ in Equation \eqref{b}, if we view variables $x_{m,m+1}$ for $m=1, 2, \cdots, n-1$ and $x_{1n}$, as the edges $\overline{12}, \overline{23},\cdots \overline{n-1,n}$ and $\overline{1n}$ of an $n$-cycle $C_n$, then we can view an element of the basis as a subset of the set of the edges of $C_n$.
	
	An element in the basis of $\overline{FK}_{C_n}(n)$ in \eqref{b}, is a matching in an $n$-cycle $C_n$. The reason is that 
	From $B[\overline{FK}_{C_n}(n)]$ in \eqref{b}, no monomial in the basis includes two variables with a common index, so we can view an element of the basis as a subset of the set of the edges of the graph $C_n$ such that no two edges have a common vertex i.e., a matching in $C_n$. 
	
	Vice versa, we can view a matching in $C_n$ as an element in the basis. 
	
	Also from the domain of the indexes in Equation \eqref{b}, we see that all the monomials in generators of $\overline{FK}_{C_n}(n)$ that have no repeated indexes
	 have been covered in the basis. 
	 
	 The above considerations imply that there is a one-to-one correspondence between the set of all matchings in an $n$-cycle $C_n$ and the basis of $\overline{FK}_{C_n}(n)$. 
	 
	 Hence the dimension of the algebra $\overline{FK}_{C_n}(n)$ is equal to the number of matchings in an $n$-cycle $C_n$. This completes the proof. 
\end{proof}
\begin{Cor}\label{lucasnumber}
	Dimension of $\overline{FK}_{C_n}(n)$ equals Lucas number $L_n$.
\end{Cor}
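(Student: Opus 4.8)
The plan is to reduce the statement, via Theorem~\ref{lucas}(2), to the purely combinatorial assertion that an $n$-cycle has exactly $L_n$ matchings, and then to prove that by a one-step recursion. First I would split a matching of $C_n$ according to whether it uses the edge $\overline{1n}$. If it does not, it is exactly a matching of the path $P_n$ on the vertices $1,2,\dots,n$ (the line segments $\overline{12},\overline{23},\dots,\overline{n-1,n}$ of Definition~\ref{matching''}); if it does, then vertices $1$ and $n$ are saturated and the remaining edges form a matching of the path $P_{n-2}$ on the vertices $2,3,\dots,n-1$. Writing $p_k$ for the number of matchings of a path on $k$ vertices, this gives $\dim\overline{FK}_{C_n}(n)=p_n+p_{n-2}$. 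In fact no new argument is really needed here: the basis \eqref{b} is already organised in exactly this way, since the families not containing $x_{1n}$ enumerate the matchings of $P_n$, while the families with leading variable $x_{1n}$ enumerate $\overline{1n}$ together with a matching of $P_{n-2}$.

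Next I would evaluate $p_k$. Conditioning on whether the last edge $\overline{k-1,k}$ of $P_k$ belongs to the matching yields $p_k=p_{k-1}+p_{k-2}$ for $k\ge 2$, with $p_0=p_1=1$; hence $p_k=F_{k+1}$, where $F_1=F_2=1$ denotes the Fibonacci sequence. Substituting into the previous identity gives $\dim\overline{FK}_{C_n}(n)=F_{n+1}+F_{n-1}$, and I would finish by invoking the classical identity $L_n=F_{n+1}+F_{n-1}$. That identity is immediate by induction, since both sides satisfy $a_n=a_{n-1}+a_{n-2}$ and agree on two consecutive indices; alternatively one can argue directly that $c_n:=p_n+p_{n-2}$ obeys $c_n=c_{n-1}+c_{n-2}$ (add the path recursions) and check the base cases, e.g.\ $C_3$ has $4=L_3$ matchings, $C_4$ has $7=L_4$ (the square example in the text), and $C_5$ has $11=L_5$.

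I do not anticipate a genuine obstacle: the argument is a short, standard count. The one point requiring care is fixing the indexing conventions for the Fibonacci and Lucas sequences consistently, since the meaning of ``$L_n$'' depends on whether one starts from $L_0=2,\,L_1=1$ or from $L_1=1,\,L_2=3$. With the convention that makes $L_n=F_{n+1}+F_{n-1}$ (equivalently $L_1=1$, $L_2=3$, $L_n=L_{n-1}+L_{n-2}$), which is the one implicit in the title and abstract, the number of matchings of $C_n$ is $L_n$ exactly, and the corollary follows.
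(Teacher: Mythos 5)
Your argument is correct and follows the paper's route: both reduce the corollary, via Theorem~\ref{lucas}, to the statement that the number of matchings of the $n$-cycle is $L_n$. The only difference is that the paper disposes of that combinatorial fact by citation, whereas you prove it by splitting on the edge $\overline{1n}$ and counting path matchings with Fibonacci numbers --- which is exactly the decomposition the paper itself uses (at the level of fixed-size matchings) in Proposition~\ref{fullHilbert} and Lemma~\ref{fibo1}, so your indexing and base cases are consistent with the text.
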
 
\begin{proof}
	From Theorem \ref{lucas} the dimensions of $\overline{FK}_{C_n}(n)$ equals the number of matchings in an $n$-cycle $C_n$, which equals the Lucas number $L_n$ \cite{eric}. This completes the proof.
\end{proof}
\subsection{The highest degree elements in the basis}
From the basis of $\overline{FK}_{C_n}(n)$ in Equation \eqref{b}, the maximum degree of a monomial in the basis (the maximum size of a matching) is attained when any two adjacent 1st indexes differ only by $2$ (equivalently if any two sides in the corresponding matching is separated by exactly one side in $C_n$). Therefore one can calculate easily that 
\begin{equation}\label{maxdeg}\begin{split}\text{For}~n\geq 4, ~ \text{max (degree /size)}~=
\lfloor\frac{n}{2}\rfloor,~ \text{and}\\ \text{multiplicity of}~\text{max (degree /size)}~=\begin{cases}
2, &\text{for even}~n\\
n, &\text{for odd}~n
\end{cases}
\end{split}\end{equation}
\begin{Ex}
	The highest degree for elements in $B[\overline{FK}(5)]$ is $\lfloor\frac{n}{2}\rfloor=\lfloor\frac{5}{2}\rfloor=2$, and its multiplicity $5$. 
\end{Ex} 
\subsection{`Matchings' and Fibonacci number}
\begin{Lemma}\label{fibo1}
	Let $M(n)$ denote the number of matchings in the set of line segments\\ $V_n=\{\overline{12},\overline{23},\cdots \overline{n-1,n}\}$, $n\geq 2$. Then
	\begin{equation}\label{chi5'}
	M(n)=F_n,~ n\geq 2, 
	\end{equation} where $F_n$ is $n$-th Fibonacci number with  initial values $F_0=1,~F_1=1$.
\end{Lemma}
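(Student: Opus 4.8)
The plan is to establish the Fibonacci recurrence $M(n)=M(n-1)+M(n-2)$ and then match it against the initial values of $F_n$. I would set up the recurrence by conditioning on whether the rightmost segment $\overline{n-1,n}$ belongs to a given matching $\mathcal{M}\subseteq V_n$ (in the sense of Definition \ref{matching''}). If $\overline{n-1,n}\notin\mathcal{M}$, then $\mathcal{M}$ is exactly a matching of the shorter segment set $V_{n-1}=\{\overline{12},\dots,\overline{n-2,n-1}\}$, and every matching of $V_{n-1}$ arises uniquely this way; this branch contributes $M(n-1)$. If $\overline{n-1,n}\in\mathcal{M}$, then vertex $n-1$ is already covered, so $\overline{n-2,n-1}\notin\mathcal{M}$, and $\mathcal{M}\setminus\{\overline{n-1,n}\}$ ranges over all matchings of $V_{n-2}=\{\overline{12},\dots,\overline{n-3,n-2}\}$; this branch contributes $M(n-2)$. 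The two cases are disjoint and exhaustive, giving $M(n)=M(n-1)+M(n-2)$ for $n\geq 3$, with the convention $M(1)=1$ for the empty set of segments.

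Next I would pin down the base cases directly from the definition: $V_2=\{\overline{12}\}$ has the two matchings $\emptyset$ and $\{\overline{12}\}$, so $M(2)=2=F_2$; and $V_3=\{\overline{12},\overline{23}\}$ has the three matchings $\emptyset$, $\{\overline{12}\}$, $\{\overline{23}\}$, so $M(3)=3=F_3$. Since the Fibonacci numbers with $F_0=F_1=1$ satisfy $F_n=F_{n-1}+F_{n-2}$, a straightforward induction on $n$ then yields $M(n)=F_n$ for all $n\geq 2$: for $n\geq 4$, assuming $M(n-1)=F_{n-1}$ and $M(n-2)=F_{n-2}$ (both indices at least $2$), the recurrence gives $M(n)=F_{n-1}+F_{n-2}=F_n$.

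There is no real obstacle here; the only point requiring a little care is bookkeeping the index ranges, in particular interpreting the ``$\overline{n-1,n}\in\mathcal{M}$'' branch correctly when $n=3$, where $V_{n-2}=V_1$ is the empty list of segments carrying the single empty matching. This is why I treat $n=2$ and $n=3$ as explicit base cases rather than pushing the recurrence below $n=3$. I note that this same bijective splitting, refined by the size of the matching, is what underlies the $q$-Fibonacci/$q$-Lucas flavor of the Hilbert series mentioned earlier, so one could alternatively phrase the argument as a weight-preserving bijection; but for the present (unweighted) statement the count above suffices.
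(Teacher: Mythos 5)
Your proposal is correct and uses essentially the same argument as the paper: both proofs obtain the Fibonacci recurrence by splitting the matchings of the segment set according to whether the last segment is used (equivalently, the paper phrases it as appending $\overline{n,n+1}$ to $V_n$ and counting the new matchings as those of $V_{n-1}$), and then check the small base case(s). Your explicit treatment of $n=3$ and the degenerate $V_1$ is a minor bookkeeping refinement, not a different method.
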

\begin{proof}
	We need to show 
	\begin{enumerate}
		\item $M(2)=F_2$, and
		\item $M(n+1)=M(n)+M(n-1)$, i.e., the recurrence for Fibonacci numbers \cite{carlitz}, \cite{cigler}.
	\end{enumerate}
	The $1$st item is obvious as $M(2)$ is the number of matching in $\{\overline{12}\}$ which equals $2$, including the empty matching, which equals $F(2)$ (as our starting values for Fibonacci sequence are $F_0=1, F_1=1$).
	
	To show the $2$nd item consider that
	\begin{displaymath}
 V_n=\{\overline{12},\overline{23},\cdots \overline{n-1,n}\}= V_{n-1}\cup\overline{n-1,n},
 \end{displaymath} where $V_{n-1}=\{\overline{12},\overline{23},\cdots\overline{n-2, n-1}\}$.  Then addition of $\overline{n,n+1}$ to $V_n$ makes $V_{n+1}$ and increases $M(n)$ by adding new matchings to it. However in order to avoid formation of common index/vertex, these new matchings have to be made by adding $\overline{n,n+1}$ to each matching only in $V_{n-1}$, as $\overline{n,n+1}$ does not have common index/vertex with elements in $V_{n-1}$ but does have common index/vertex $n$ with $\overline{n-1,n}$.
	Therefore the number of new matchings to be added to $M(n)$ is exactly the number of matchings in $V_{n-1}$, i.e., $M(n-1)$. Hence we have
	\begin{displaymath}
	 M(n+1)=M(n)+M(n-1).
	 \end{displaymath}
	This completes the proof. 
\end{proof}
\begin{Lemma}\label{fibo12} \cite{carlitz}, \cite{cigler}.
	Let $M(n,k)$ denote the number of matchings of size $k$ in the set of line segments  $V_n=\{\overline{12},\overline{23},\cdots \overline{n-1,n}\}$. Then 
	\begin{enumerate}
		\item  $M(n+1,k)=M(n,k)+M(n-1,k-1), ~\text{for}~ n\geq 2,~k\geq 0$,
		\item $M(n, k)={n-k\choose k}$.
	\end{enumerate}	
\end{Lemma}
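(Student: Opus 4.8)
The plan is to prove both identities for $M(n,k)$, the number of size-$k$ matchings in the path of line segments $V_n=\{\overline{12},\overline{23},\dots,\overline{n-1,n}\}$, by a combinatorial case-split on whether the last segment $\overline{n-1,n}$ is used, mirroring the argument of Lemma \ref{fibo1}. First I would establish part (1): fix a size-$k$ matching in $V_{n+1}=V_n\cup\{\overline{n,n+1}\}$ and split into two classes. If the matching does not contain $\overline{n,n+1}$, it is exactly a size-$k$ matching in $V_n$, giving $M(n,k)$ possibilities. If it does contain $\overline{n,n+1}$, then it cannot contain $\overline{n-1,n}$ (common vertex $n$), so the remaining $k-1$ segments form a size-$(k-1)$ matching in $V_{n-1}=\{\overline{12},\dots,\overline{n-2,n-1}\}$, giving $M(n-1,k-1)$ possibilities; the two classes are disjoint and exhaustive, so $M(n+1,k)=M(n,k)+M(n-1,k-1)$. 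I would also note the boundary data $M(n,0)=1$ for all $n$ and $M(1,k)=0$ for $k\ge 1$ (and $M(0,k)=0$ for $k\ge 1$), which anchors the recursion.

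For part (2), the cleanest route is a direct bijection: a size-$k$ matching in $V_n$ is a choice of $k$ pairwise non-adjacent segments among the $n-1$ segments $\overline{12},\dots,\overline{n-1,n}$, equivalently $k$ integers from $\{1,\dots,n-1\}$ no two consecutive. By the standard "stars and bars" argument for non-consecutive selections (subtract $i-1$ from the $i$-th chosen index to compress into an ordinary $k$-subset of $\{1,\dots,n-k\}$), the count is $\binom{n-k}{k}$; I would spell out this compression map and its inverse. Alternatively, one can verify $\binom{n-k}{k}$ satisfies the recurrence of part (1) together with the initial conditions, using Pascal's rule $\binom{(n+1)-k}{k}=\binom{n-k}{k}+\binom{n-k}{k-1}=\binom{n-k}{k}+\binom{(n-1)-(k-1)}{k-1}$, and then invoke uniqueness of the solution; I would present the bijective proof as primary and mention this as a check.

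The one genuine subtlety, rather than an obstacle, is bookkeeping the index shift by one between "number of segments" ($n-1$) and the parameter $n$: the segments of $V_n$ are indexed $1$ through $n-1$, a matching of size $k$ forces $2k-1\le n-1$ hence $k\le\lfloor n/2\rfloor$, and for $k$ outside this range both $M(n,k)$ and $\binom{n-k}{k}$ vanish, so the formulas remain valid as stated. I would state explicitly that the recurrence in (1) holds for $n\ge 2$, $k\ge 0$ with the convention that $M$ and the binomial coefficient are zero when the lower parameter exceeds the upper or when arguments go negative, so that the edge cases ($k=0$, or $k$ maximal) require no separate treatment. No step here is deep; the proof is a careful formalization of the path-matching decomposition already used for Lemma \ref{fibo1}, now refined to track the size statistic.
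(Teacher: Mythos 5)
Your proof of part (1) is the same decomposition the paper uses: split size-$k$ matchings of $V_{n+1}$ according to whether they contain $\overline{n,n+1}$, noting that inclusion of that segment excludes $\overline{n-1,n}$ and leaves a size-$(k-1)$ matching of $V_{n-1}$. For part (2), however, you take a genuinely different route. The paper proves $M(n,k)=\binom{n-k}{k}$ by a double induction on $n$ and $k$, feeding the recurrence from part (1) into Pascal's rule $\binom{n-k}{k}+\binom{n-k}{k-1}=\binom{n+1-k}{k}$; your primary argument is instead a direct bijection, identifying a size-$k$ matching with a $k$-subset of the segment indices $\{1,\dots,n-1\}$ containing no two consecutive integers and compressing via $a_i\mapsto a_i-(i-1)$ onto ordinary $k$-subsets of $\{1,\dots,n-k\}$. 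Your bijection is correct (segments $i$ and $j$ share a vertex exactly when $|i-j|\le 1$, so matchings are precisely the non-consecutive selections, and the standard count $\binom{(n-1)-k+1}{k}=\binom{n-k}{k}$ applies), and it has the advantage of being self-contained --- it does not depend on part (1) at all and sidesteps the somewhat delicate bookkeeping of the paper's double induction, whose second branch is stated a bit loosely. What the paper's approach buys is economy: it reuses the recurrence it has just established and needs only Pascal's rule. You correctly flag the recurrence-plus-induction argument as an equivalent check, and your explicit treatment of the boundary conventions ($M(n,0)=1$, vanishing for $k>\lfloor n/2\rfloor$) is more careful than the paper's. No gap.
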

\begin{proof}\begin{enumerate}
		\item 
		Consider $V_n$ as
		\begin{displaymath} V_n=V_{n-1}\cup \{\overline{n-1,n}\},
		\end{displaymath} where		
	 $V_{n-1}=\{\overline{12},\overline{23},\cdots \overline{n-2,n-1}\}$.
	  Addition of the line segment $\overline{n,n+1}$ to $V_n$ makes 
	  \begin{displaymath}
	  V_{n+1}=\{\overline{n,n+1}\}\cup V_{n-1}\cup \{\overline{n-1,n}\}.
	  \end{displaymath} This addition increases $M(n, k)$ by adding $\overline{n,n+1}$ to each element of size $k-1$ in only $V_{n-1}$ without developing common index/vertex, as the elements of $V_{n-1}$ do not have indexes $n$ or $n+1$. So the number of these new matchings to be added to $M(n, k)$ is exactly the number of elements in $V_{n-1}$ of size $k-1$, i.e., $M(n-1, k-1)$. Hence we have
	  \begin{displaymath}
	  M(n+1,k)=M(n,k)+M(n-1,k-1).
		\end{displaymath} 
		\item 
		To prove $M(n, k)={n-k\choose k}$ we use double induction. We need to show that
		\begin{enumerate}
			\item base of induction: For $n=2$ and $k=1$ the statement $M(2,1)={2-1\choose 1}$ is true, as $M(2,1)=1$ and ${2-1\choose 1}={1\choose 1}=1$.
			\item Inductive step: We need to show 		
			 \begin{displaymath}\begin{split}				\text{If}~	&M(n,k)={n-k\choose k} \forall n\geq 2,~k>1,~\text{then}~ M(n+1,k)={n+1-k\choose k}.\\
			M(n+1,k)&=M(n, k)+M(n-1, k-1)\\&={n-k\choose k}+{n-1-(k-1)\choose k-1}={n-k\choose k}+{n-k\choose k-1}\\
			&=\frac{(n-k)!}{k!(n-2k)!}+\frac{(n-k)!}{(k-1)!(n-2k+1)!}=\cdots=\frac{(n-k+1)!}{k!(n-2k+1)!}\\&={n+1-k\choose k}.\\
			\text{and}\\
			\text{if}~				&M(n,k)={n-k\choose k} \forall n\geq 2,~k>1,~\text{then}~ M(n,k+1)={n-k-1\choose k+1}.\\
				M(n,k+1)&=M(n-1, k+1)+M(n-2, k)\\&={n-1-(k+1)\choose k+1}+{n-2-k\choose k}={n-k-2\choose k+1}+{n-k-2\choose k}\\
				&=\frac{(n-k-2)!}{(k+1)!(n-2k-3)!}+\frac{(n-k-2)!}{k!(n-2k-2)!}=\cdots=\frac{(n-k-1)!}{(k+1)!(n-2k-21)!}\\&={n-k-1\choose k+1}
			\end{split}
		\end{displaymath}
		\end{enumerate}
	\end{enumerate}
	This completes the proof.	 
\end{proof}			
	
\begin{Prop}\label{fullHilbert}\cite{carlitz}, \cite{cigler}. Let $M_{C_n}(n,k)$ denote the number of matchings of size $k$ in an $n$-cycle graph $C_n$ and $M(n, k)$ the number of matchings of size $k$ in the set of line segments  $V_n=\{\overline{12},\overline{23},\cdots \overline{n-1,n}\}$. Then we have
	\begin{equation}\label{123'}
	\begin{split}
	M_{C_n}(n, k)=&M(n, k)+M(n-2, k-1),~n\geq 4, ~k=2, 3, \cdots, \lfloor\frac{n}{2}\rfloor,~and\\
	&M_{C_n}(n, k)=\frac{n}{n-k}{n-k\choose k},
	\end{split}
	\end{equation}	
	 where $M(n, 0)=1$, \text{and where}~ $M(n, k)={n-k\choose k}$ is the number of matchings of size $k$ in $C_n\setminus \overline{\{1n\}}$. 
\end{Prop}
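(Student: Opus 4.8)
The plan is to establish part~(1) by a direct combinatorial decomposition of the size-$k$ matchings of $C_n$ according to whether or not the distinguished edge $\overline{1n}$ is used, and then to deduce part~(2) from part~(1) together with the closed form $M(n,k)=\binom{n-k}{k}$ of Lemma~\ref{fibo12}, followed by a one-line binomial identity and a check of the two boundary values.

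\textbf{Part (1).} Fix $n\geq 4$ and $2\leq k\leq\lfloor n/2\rfloor$, and split the size-$k$ matchings of $C_n$ into two disjoint classes. If $\overline{1n}$ is not in the matching, then the matching uses only edges of the path $C_n\setminus\overline{\{1n\}}$, whose edge set is $V_n=\{\overline{12},\overline{23},\dots,\overline{n-1,n}\}$, so by definition the number of such matchings is $M(n,k)$. If $\overline{1n}$ is in the matching, then vertices $1$ and $n$ are already covered, so neither $\overline{12}$ nor $\overline{n-1,n}$ can appear, and the remaining $k-1$ edges form a matching in the path with edge set $\{\overline{23},\overline{34},\dots,\overline{n-2,n-1}\}$. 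This path has $(n-1)-2=n-3$ edges on the vertices $2,3,\dots,n-1$, hence is isomorphic to the segment system $V_{n-2}$, which likewise has $(n-2)-1=n-3$ edges; so the number of such matchings equals $M(n-2,k-1)$. Adding the two classes gives $M_{C_n}(n,k)=M(n,k)+M(n-2,k-1)$, with the hypotheses $n\geq 4$ and $k\geq 2$ ensuring both residual path systems are well defined.

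\textbf{Part (2).} Substituting $M(n,k)=\binom{n-k}{k}$ from Lemma~\ref{fibo12}(2) into the recurrence of part~(1) yields
\begin{displaymath}
M_{C_n}(n,k)=\binom{n-k}{k}+\binom{(n-2)-(k-1)}{k-1}=\binom{n-k}{k}+\binom{n-k-1}{k-1}.
\end{displaymath}
Pulling out the common factor $(n-k-1)!\,/\,\bigl(k!\,(n-2k)!\bigr)$ gives
\begin{displaymath}
\binom{n-k}{k}+\binom{n-k-1}{k-1}=\frac{(n-k-1)!}{k!\,(n-2k)!}\bigl[(n-k)+k\bigr]=\frac{n\,(n-k-1)!}{k!\,(n-2k)!}=\frac{n}{n-k}\binom{n-k}{k},
\end{displaymath}
which is the asserted closed form. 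Finally one checks the two boundary values not governed by the recurrence: for $k=0$ both $M_{C_n}(n,0)$ and $\tfrac{n}{n}\binom{n}{0}$ equal $1$, and for $k=1$ both $M_{C_n}(n,1)$ (the number of edges of $C_n$) and $\tfrac{n}{n-1}\binom{n-1}{1}$ equal $n$; hence $M_{C_n}(n,k)=\tfrac{n}{n-k}\binom{n-k}{k}$ holds for all $0\leq k\leq\lfloor n/2\rfloor$.

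I do not expect a serious obstacle: the argument is elementary. The only point demanding genuine care is the edge count in part~(1)---verifying that deleting $\overline{1n}$, $\overline{12}$ and $\overline{n-1,n}$ leaves a path with exactly $n-3$ edges, so that the residual count is $M(n-2,k-1)$ and not $M(n-3,k-1)$ or $M(n-1,k-1)$---together with the separate handling of $k=0,1$. Everything else collapses to the identity $\binom{m}{k}+\binom{m-1}{k-1}=\tfrac{m+k}{m}\binom{m}{k}$ applied with $m=n-k$.
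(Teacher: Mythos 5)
Your proposal is correct and follows essentially the same route as the paper: both decompose the size-$k$ matchings of $C_n$ according to whether the edge $\overline{1n}$ is used (the paper phrases this as adding $\overline{1n}$ back to $C_n\setminus\{\overline{1n}\}$, which is the same case split), identify the second class with matchings of size $k-1$ in the $(n-3)$-edge path $\{\overline{23},\dots,\overline{n-2,n-1}\}\cong V_{n-2}$, and then derive the closed form by the identical substitution of $M(n,k)=\binom{n-k}{k}$ and the same binomial manipulation. Your explicit check of the boundary cases $k=0,1$ is a small, welcome addition that the paper omits.
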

\begin{proof}
	$M(n, k)$ could also be viewed as the number of matchings of size $k$ in $C_n\setminus{\{\overline {1n}\}}$. We rewrite $C_n\setminus{\{\overline {1n}\}}$ as
	\begin{displaymath}
 C_n\setminus{\{\overline {1n}\}}=\{\overline{12} \}\cup V_{n-2}\cup \{\overline{n-1,n}\},
 \end{displaymath} where $V_{n-2}=\{\overline{23},\overline{34}, \cdots, \overline{n-2,n-1}\}$. Now adding $\overline{1n}$ to $C_n\setminus{\{\overline {1n}\}}$ completes the $n$-cycle, as well increases $M(n, k)$ to $M_{C_n}(n, k)$ by making new matchings of size $k$. However to avoid making a common vertex we need to add $\overline{1n}$ to the elements of size $k-1$ only in $V_{n-2}$, as they do not involve in indexes $1$ or $n$ but $\overline{12}$ and $\overline{n-1,n}$ do involve in common indexes $1$ or $n$.
	So the number of these new matchings is equal to the number of matchings of size $k-1$ in $V_{n-2}$ i.e., $M(n-2, k-1)$. Hence $M_{C_n}(n, k)=M(n, k)+M(n-2, k-1)$. This completes the proof of the first part of the proposition.\\
	To prove the second part, substituting $M(n, k)={n-k\choose k}$ from Lemma \ref{fibo12} into the first part, we have
	\begin{eqnarray}\begin{split}
	M_{C_n}(n, k)&=M(n, k)+M(n-2, k-1)={n-k\choose k}+{n-1-k\choose k-1}\\&={n-k\choose k}+\frac{(n-1-k)!}{(k-1)!(n-2k)!}={n-k\choose k}+\frac{\frac{(n-k)!}{(n-k)}}{\frac{k!}{k}(n-2k)!}\\&={n-k\choose k}+\frac{k}{n-k}\frac{(n-k)!}{k!(n-2k)!}={n-k\choose k}+\frac{k}{n-k}{n-k\choose k}\\&=\frac{n}{n-k}{n-k\choose k}.
	\end{split}
	\end{eqnarray} This completes the proof. 
\end{proof}
\subsection{Hilbert series and $q$-Fibonacci/$q$-Lucas polynomials}
From Proposition \ref{fullHilbert} and Theorem \ref{lucas} and Corollary \ref{lucasnumber}, we have
\begin{displaymath}
L_n=M_{C_n}(n)=\sum_{k=0}^{\lfloor\frac{n}{2}\rfloor}M_{C_n}(n,k)=\sum_{k=0}^{\lfloor\frac{n}{2}\rfloor}\frac{n}{n-k}{n-k\choose k}.
\end{displaymath} The upper limit in the sum is due to the fact that maximum size of a matching in $C_n$ or the maximum degree of an element in the basis of $\overline{FK}_{C_n}(n)$ is $\lfloor\frac{n}{2}\rfloor$. 

Hence taking sum over all the matchings of different sizes $k$ in $C_n$ (elements of different degrees $k$ in the basis of $\overline{FK}_{C_n}(n)$), we have $q$-Lucas polynomial \cite{carlitz}, \cite{cigler}:
\begin{equation}\label{q-lucas}
L_n(q)=\sum_{k=0}^{\lfloor\frac{n}{2}\rfloor}\frac{n}{n-k}{n-k\choose k}q^k,~L_0=2.
\end{equation}
From Lemmas \ref{fibo1} and \ref{fibo12} we have the total number of matchings in a set of line segments  $V_n=\{\overline{12},\overline{23},\cdots \overline{n-1,n}\}$ to be $M_n=F_n$ and the number of matchings of size $k$ in $V_n$ to be $M(n,k)={n-k\choose k}$. This results in
\begin{displaymath}
F_n=M(n)=\sum_{k=0}^{\lfloor\frac{n}{2}\rfloor}M(n,k)=\sum_{k=0}^{\lfloor\frac{n}{2}\rfloor}{n-k\choose k}.
\end{displaymath}
Hence taking sum over all matchings of different sizes we have $q$-Fibonacci polynomial \cite{carlitz}, \cite{cigler}: 
\begin{equation}\label{q-fibo} F_n(q)=\sum_{k=0}^{\lfloor\frac{n}{2}\rfloor}{n-k\choose k}q^k. 
\end{equation}
\begin{Prop}
	Hilbert series of $\overline{FK}_{C_n}(n)$ is $q$-Lucas number
	\begin{equation}
	H_n=\sum_{k=0}^{\lfloor\frac{n}{2}\rfloor}\frac{n}{n-k}{n-k\choose k}q^k=L_n(q).
	\end{equation}	
\end{Prop}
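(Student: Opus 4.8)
The plan is to read off the Hilbert series term by term from the monomial basis and then invoke the matching count already established. By definition $H_n = H_n(q) = \sum_{k\ge 0}\bigl(\dim \overline{FK}_{C_n}^{(k)}(n)\bigr)\, q^k$, where $\overline{FK}_{C_n}^{(k)}(n)$ is the usual-degree-$k$ component from the degree decomposition in \eqref{oplus}. The first and only substantive step is to observe that the bijection of Theorem \ref{lucas} between the monomial basis $B(\overline{FK}_{C_n}(n))$ displayed in \eqref{b} and the set of matchings of $C_n$ is graded: a basis element of usual degree $k$ is, by construction, a product of exactly $k$ of the generators $x_{12},\dots,x_{n-1,n},x_{1n}$, and under the identification $x_{m,m+1}\leftrightarrow\overline{m,m+1}$, $x_{1n}\leftrightarrow\overline{1n}$ used in the proof of Theorem \ref{lucas}, such a product corresponds precisely to a set of $k$ pairwise vertex-disjoint edges of $C_n$, i.e.\ a matching of size $k$; conversely every size-$k$ matching of $C_n$ arises from a unique such basis element. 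Hence $\dim \overline{FK}_{C_n}^{(k)}(n) = M_{C_n}(n,k)$ for every $k$.

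Second, I would substitute the closed form for $M_{C_n}(n,k)$. Proposition \ref{fullHilbert} gives $M_{C_n}(n,k) = \frac{n}{n-k}\binom{n-k}{k}$ for $n\ge 4$ and $2\le k\le\lfloor n/2\rfloor$, and the boundary values $M_{C_n}(n,0)=1$ (the empty matching) and $M_{C_n}(n,1)=n$ (the $n$ single edges) are immediate and also agree with $\frac{n}{n-k}\binom{n-k}{k}$ at $k=0,1$. Since the largest possible matching size in $C_n$ — equivalently, by \eqref{maxdeg}, the top degree of a basis element — is $\lfloor n/2\rfloor$, the series truncates there, so
\begin{displaymath}
H_n \;=\; \sum_{k=0}^{\lfloor n/2\rfloor} M_{C_n}(n,k)\, q^k \;=\; \sum_{k=0}^{\lfloor n/2\rfloor}\frac{n}{n-k}\binom{n-k}{k}\, q^k,
\end{displaymath}
which is exactly the $q$-Lucas polynomial $L_n(q)$ recorded in \eqref{q-lucas}. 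This completes the argument.

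The arithmetic here is entirely routine, being already packaged in Lemma \ref{fibo12} and Proposition \ref{fullHilbert}. The one point that deserves explicit care — more a bookkeeping matter than a real obstacle — is the verification that the correspondence of Theorem \ref{lucas} genuinely intertwines the usual grading on $\overline{FK}_{C_n}(n)$ with the size statistic on matchings, including the corner cases $k=0$ (the unit $1$ paired with the empty matching) and $k=1$ (the generators paired with single edges); once this grading compatibility is spelled out, the Proposition is just the generating-function reformulation of Proposition \ref{fullHilbert}.
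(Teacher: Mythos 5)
Your proposal is correct and follows essentially the same route as the paper: both arguments reduce the Hilbert series to the count of size-$k$ matchings in $C_n$ via the graded bijection of Theorem \ref{lucas} and then substitute the closed form $M_{C_n}(n,k)=\frac{n}{n-k}\binom{n-k}{k}$ from Proposition \ref{fullHilbert} to recognize the $q$-Lucas polynomial \eqref{q-lucas}. If anything, your write-up is more careful than the paper's, which leaves the compatibility of the usual grading with the matching-size statistic implicit.
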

\begin{proof}
	We recall here \eqref{q-lucas},
	\begin{displaymath}
	L_n(q)=\sum_{k=0}^{\lfloor\frac{n}{2}\rfloor}\frac{n}{n-k}{n-k\choose k}q^k,~L_0=2.
	\end{displaymath}
	In the above expression, degree $k$, as the size of matching, is summed over to give us the Lucas number which is by Corollary
	\ref{lucasnumber} equal to the dimension of $\overline{FK}_{C_n}(n)$. Hence the above expression indeed demonstrates Hilbert series  
	\begin{displaymath}
	H_n=\sum_{k=0}^{\lfloor\frac{n}{2}\rfloor}\frac{n}{n-k}{n-k\choose k}q^k.
	\end{displaymath}
	This completes the proof.
\end{proof}
\begin{Ex}
	$H_5=\sum_{k=0}^2\frac{5}{5-k}{5-k\choose k}q^k={5\choose 0}+\frac{5}{4}{4\choose 1}q+\frac{5}{3}{3\choose 2}q^2=1+5q+5q^2$. Similarly $H_6=1+6q+9q^2+2q^3$.
\end{Ex}
\subsection{The action of $D_n$ and decomposition}
In this section we derive the character of $\overline{FK}_{C_n}(n)$ over $D_n$ for even and odd $n$. 
\begin{Rem}\label{char-cal}
As mentioned before, $D_n$ defines an action on algebra $\overline{FK}_{C_n}(n)$. Under the action of $D_n$, the component of the character of $\overline{FK}_{C_n}(n)$ associated with the conjugacy class indexed by $\sigma\in D_n$, is the sum of the coefficients of $z$ in $\sigma(z)$ when $z$ runs in the basis $B[\overline{FK}_{C_n}(n)]$. 
\end{Rem}
\begin{Lemma}\label{char11}
	Let $r\in D_n$ realized in permutation group $S_n$ as $r=(1,2,\cdots,n)$. Then for integers $2\leq p\leq n$ the trace of representation of $r^p\in D_n$ on $\overline{FK}_{C_n}(n)$ is
	\begin{equation}
	\chi(r^p)=1+p\delta_{n, pm},
	\end{equation}for some $m=\frac{n}{p}$ if it is an integer. In $q$-counting according to the total degree decomposition form we have 
	\begin{equation}\label{qcchar}
	[\chi(r^p)](q)=1+pq^\frac{n}{p}\delta_{\frac{n}{p},\text{integer}}.
	\end{equation} 
\end{Lemma}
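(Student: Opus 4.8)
The plan is to evaluate $\chi(r^p)$ directly from Remark \ref{char-cal}: its value is the sum, over the basis elements $z$ listed in \eqref{b}, of the coefficient of $z$ in $r^p(z)$. By Theorem \ref{lucas} these basis monomials correspond bijectively to matchings of $C_n$, and by the rule \eqref{sigmax} the operator $r^p$ carries each such monomial to $\pm$(a basis monomial); hence the only contributions come from matchings $M$ with $r^p(M)=\pm M$, each counted with the appropriate sign $\pm 1$. So the task splits into (i) finding the $r^p$-invariant matchings of $C_n$ and (ii) determining the sign attached to each.

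First I would make the edge action explicit. Writing $e_m=\overline{m,m+1}$ for $1\le m\le n-1$ and $e_n=\overline{1n}$, formula \eqref{sigmax} applied to $r=(1,2,\dots,n)$ from \eqref{r,s} gives $r\,x_{m,m+1}=x_{m+1,m+2}$ for $1\le m\le n-2$, while $r\,x_{n-1,n}=-x_{1n}$ and $r\,x_{1n}=-x_{12}$. Thus $r$ rotates the $n$ edges of $C_n$ cyclically, $e_m\mapsto e_{m+1}$ (indices mod $n$), the only sign changes being at the two transitions incident to $e_n$. Consequently $r^p$ acts on the edge set as rotation by $p$, whose orbits have size $n/\gcd(n,p)$ and number $\gcd(n,p)$; and a matching is $r^p$-invariant precisely when it is a union of such edge-orbits.

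Next I would classify which unions of edge-orbits are matchings. The empty matching always qualifies and contributes the summand $1$ (the degree-$0$ part of the algebra). For a nonempty contribution one first checks that a single orbit $\{e_j,e_{j+p},e_{j+2p},\dots\}$ is a matching exactly when $p\mid n$ — then its edges are equally spaced with gap $p\ge 2$ around $C_n$ — and in that case there are exactly $p=\gcd(n,p)$ such orbit-matchings, each of size $n/p$. The remaining point is to rule out unions of two or more orbits, which I would phrase as an independence statement on the auxiliary cycle whose vertices are the orbits and whose adjacencies record when two orbits contain incident edges. Finally I would check the signs: reordering a product of edges into the normal form of \eqref{b} uses only the sign-free commutations in \eqref{2terms}, and along a full $r^p$-orbit the wrap-point sign flips coming from \eqref{sigmax} occur an even number of times, so each of the $p$ invariant matchings is counted with coefficient $+1$. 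Assembling, $\chi(r^p)=1+p\,\delta_{n,pm}$ with $m=n/p$, and since the $p$ nontrivial matchings all have degree $m=n/p$, grading by degree yields the refinement $[\chi(r^p)](q)=1+pq^{n/p}\delta_{n/p,\text{integer}}$ of \eqref{qcchar}.

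I expect the classification-and-sign step to be the main obstacle. The edge action and the reduction to a signed fixed-point count are routine, but one must carefully exclude $r^p$-invariant matchings assembled from several orbits and then track the interaction between the sign convention \eqref{sigmax} and the normal-form rewriting \eqref{b} uniformly in $n$ and $p$; that bookkeeping is the only delicate part of the argument, and it is where the precise shape of the answer — in particular the coefficient $p$ rather than some larger count — is actually decided.
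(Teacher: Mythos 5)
Your strategy is the same as the paper's: reduce $\chi(r^p)$ to a signed count of the matchings of $C_n$ fixed by the rotation $r^p$, observe that the sign-flipping edges under \eqref{sigmax} (namely $\overline{1n}$ and $\overline{n-p,n-p+1}$) occur in an invariant matching either both or not at all so that every fixed matching contributes $+1$, and then classify the fixed matchings as the empty one together with, when $p\mid n$, the $p$ equally spaced ones of size $n/p$. Your orbit formulation is a slightly more systematic packaging of the paper's "a matching that rotates onto itself must have all consecutive edges separated by the same number $p-1$ of edges," and your parity argument for the signs is the paper's cancellation argument.

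The step you yourself flag as the main obstacle --- excluding invariant matchings assembled from two or more edge-orbits --- is a genuine gap, and it is exactly the point where the argument (yours and the paper's alike) is not actually established. Writing $d=\gcd(n,p)$, the $r^p$-orbit of an edge $e_j$ is, as a set, $\{e_j,e_{j+d},e_{j+2d},\dots\}$; so a single orbit is a matching whenever $d\ge 2$, not only when $p\mid n$ (for $n=6$, $p=4$ the orbit $\{e_1,e_3,e_5\}$ is an invariant matching although $4\nmid 6$). More seriously, when $d\ge 4$ the union of the orbits of $e_j$ and $e_{j+k}$ for $2\le k\le d-2$ contains no two incident edges and is therefore an invariant matching: for $n=8$, $p=4$ the union $\{e_1,e_5\}\cup\{e_3,e_7\}$ is an $r^4$-invariant perfect matching with all signs $+1$, so the true trace is $1+4q^2+2q^4$ rather than $1+4q^2$. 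Thus the exclusion you hoped to prove is false in general, and the coefficient "$p$" in the statement undercounts for such $n,p$; the formula is safe only when $\gcd(n,p)\le 3$. Any complete proof has to replace the single-orbit count by a count of independent unions of orbits, which changes the statement of the lemma itself for $n\ge 8$.
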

\begin{proof}
	One can view a monomial $M\in B[\overline{FK}_{C_n}(n)]$ as a matching in an $n$-cycle (i.e., a subset of the set of edges of the $n$-cycle wherein no two edges have a common vertex). Also we can consider $r$ as a clockwise rotation of angle $\frac{2\pi}{n}$ in $C_n$. Then $r^pM$ is the result of clockwise rotation of the matching $M$ in $n$-cycle by angle $\frac{2\pi p}{n}$. So $r^pM=M$ means that after rotation of a matching $M$ over $p$ edges of the $n$-cycle, the matching $M$ coincides with itself. We discuss the following cases.
	\begin{enumerate}
		\item The possibility that $r^pM=-M$ is absurd, i.e., the operator $r^p$ does not have an eigenvector with eigenvalue $-1$. The reason is that in order for $r^pM$ to develop a minus sign we need $M$ contains an $x_{1n}$ to get rotated into
		\begin{displaymath}
	 r^px_{1n}=x_{1+p,n+p}=x_{1+p,p}=-x_{p,p+1},
	 \end{displaymath} and thereby create a minus sign. Then since $r^p$ rotates $M$ as a matching onto itself, there must be another variable in $M$ like  $x_{n-p, n+1-p}$ to be rotated onto $x_{1n}$ to preserve $M$ as a matching in $C_n$. However then
	 \begin{displaymath}
r^px_{n-p, n+1-p}=x_{n, n+1}=x_{n,1}=-x_{1n};
\end{displaymath} a second minus sign to cancel the first one. So $r^pM=-M$ can not hold.
		\item
		$rM=M$ does not hold for non-empty $M$. The reason is that since $r$ is a clockwise rotation of $\frac{2\pi}{n}$ in $C_n$, it takes an edge in the matching $M$ in $C_n$, into the edge next to it (the neighbor edge) in $C_n$. 
		However this next edge is not in $M$ otherwise there would be a common vertex in $M$ that is forbidden by the definition of a matching. Thus $rM=M$ can not hold for a non-empty matching $M$.
		\item 
		For $2\leq p \leq n$ and a non-empty matching $M$, for $r^pM=M$ to hold, we need that $p$ divides $n$, $p|n$ and the matching $M$ be symmetric in $C_n$ in the sense that any two successive edges in $M$ be separated by the same number  $p-1$ of edges in $C_n$. Therefore the number of non-empty matchings $M$ such that $r^pM=M$, is equal to $p\delta_{n, pm}$ for some $m=\frac{n}{p}$, if it is an integer.
		\item For any $p$ there is always an empty matching (associated for identity element $1$ in the basis) such that $r^pM=M (as~ r^p1=1)$.
	\end{enumerate}
	Considering items $1-4$, we come up with the conclusion that for a given integer $p$ such that $2\leq p\leq n$, the number of matchings (monomials) $M$ such that $r^pM=M$, i.e., the trace of representation of $r^p$ is $1+p\delta_{n, pm}$ for some $m=\frac{n}{p}$ if it is an integer. Therefore for  we have
	\begin{displaymath}
\chi(r^p)=1+p\delta_{n, pm},~2\leq p \leq n,~\text{for some}~ m=\frac{n}{p}~\text{if it is an integer}.
	\end{displaymath} The number $1$ in $\chi(r^p)=1+p\delta_{n, pm}$ refers to the empty matching (degree zero element of the basis) and $p\delta_{n, pm}$ to the nonempty ones of size (degree)  $m=\frac{n}{p}$ if it is an integer.
	
	 Therefore we can write the $q$-counting form of $\chi(r^p)=1+p\delta_{n, pm}$ as
	 \begin{displaymath}
	 [\chi(r^p)](q)=1+pq^\frac{n}{p}\delta_{\frac{n}{p},\text{integer}},
	 \end{displaymath}
	 where $\frac{n}{p}$ is the size of the set of non-empty matchings, that are the eigenvectors of $r^p$. This completes the proof.
\end{proof}	
\begin{Lemma}\label{evens}
	For even $n\geq 4$, let $s\in D_n$, realized in permutation group as\\ $s=(1n)(2, n-1)\cdots (\frac{n}{2}, \frac{n}{2}+1)$ in an $n$-cycle graph $C_n$. Let $\chi(s)$ be the trace of the representation of $s$ over $\overline{FK}_{C_n}(n)$. Then
	\begin{equation}\label{char1}
	[\chi(s)](q)=\sum_{k=0}^{\lfloor\frac{n}{4}\rfloor}{\frac{n}{2}-k\choose k}q^{2k}-2\sum_{k=0}^{\lfloor\frac{\frac{n}{2}-1}{2}\rfloor}{\frac{n}{2}-1-k\choose k}q^{2k+1}+\sum_{k=0}^{\lfloor\frac{\frac{n}{2}-2}{2}\rfloor}{\frac{n}{2}-2-k\choose k}q^{2k+2}.
	\end{equation} In terms of $q$-Fibonacci polynomials, according to the total degree decomposition we have
	\begin{equation}[\chi(s)](q)=F_{\frac{n}{2}}(q^2)-2qF_{\frac{n}{2}-1}(q^2)
	+q^2F_{\frac{n}{2}-2}(q^2).
	\end{equation}
\end{Lemma}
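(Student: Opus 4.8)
The plan is to evaluate $[\chi(s)](q)$ through Remark \ref{char-cal}: by Theorem \ref{lucas} each basis monomial $M\in B[\overline{FK}_{C_n}(n)]$ is a matching in $C_n$, and $s$ is the involution of $C_n$ acting on vertices by $k\mapsto n+1-k$. Since $s$ sends each generator $x_e$ to $\pm x_{s(e)}$ and preserves non\-adjacency of edges, it permutes the basis up to sign, so $[\chi(s)](q)$ is the sum, over those $M$ whose edge set is carried to itself by $s$, of (coefficient of $M$ in $s(M)$)$\cdot q^{\deg M}$. Thus the computation splits into (a) pinning down the scalar $s(M)=\pm M$ on each setwise\-fixed matching, and (b) enumerating, by size, the matchings of $C_n$ with $s$\-stable edge set.

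For (a) I claim $s(M)=(-1)^{\deg M}M$ for every setwise\-fixed $M$. Since $k\mapsto n+1-k$ reverses the order of $\{1,\dots,n\}$, each edge $\overline{m,m+1}$ has $\{s(m),s(m+1)\}=\{n-m,n+1-m\}$ with $s(m)>s(m+1)$, so by \eqref{sigmax} $s\,x_{m,m+1}=-x_{n-m,n+1-m}$, and likewise $s\,x_{1n}=-x_{1n}$; hence applying $s$ factorwise to $M=x_{e_1}\dots x_{e_k}$ contributes a global sign $(-1)^k$. It remains to observe that the surviving basis monomials of \eqref{b} pairwise commute: any two edges of a matching are non\-adjacent, so either both have the form $x_{m,m+1}$ with first indices differing by at least $2$, or one is $x_{1n}$ and the other is $x_{m,m+1}$ with $2\le m\le n-2$, and in each case the relevant relation of \eqref{2terms} is a sign\-free commutation (and no further reduction occurs, as $s$ preserves non\-adjacency). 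So rewriting $s(M)$ in canonical form introduces no new sign, giving $[\chi(s)](q)=\sum_M(-1)^{\deg M}q^{\deg M}$ over $s$\-stable matchings $M$.

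For (b) I record the geometry of $s$ on the edges of $C_n$: it fixes exactly the two axis edges $e_0:=\overline{1n}$ and $e_*:=\overline{\tfrac{n}{2},\tfrac{n}{2}+1}$, and interchanges $\overline{i,i+1}\leftrightarrow\overline{n-i,n+1-i}$ for $1\le i\le\tfrac{n}{2}-1$; the remaining $n-2$ edges thus form two vertex\-disjoint arcs $A_1=\{\overline{12},\dots,\overline{\tfrac{n}{2}-1,\tfrac{n}{2}}\}$ and $A_2=s(A_1)$, each a path with $\tfrac{n}{2}-1$ edges, swapped by $s$. An $s$\-stable matching $M$ is then determined by whether each of $e_0,e_*$ lies in it, together with $M\cap A_1$ (an arbitrary matching of $A_1$, subject only to: $e_0\in M$ forbids the end edge $\overline{12}$ of $A_1$, and $e_*\in M$ forbids the end edge $\overline{\tfrac{n}{2}-1,\tfrac{n}{2}}$), with $M\cap A_2$ forced to be $s(M\cap A_1)$; since $A_1,A_2$ are vertex\-disjoint this is automatically a valid matching. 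Counting size\-$k$ matchings of a path with $p$ edges as $M(p+1,k)=\binom{p+1-k}{k}$ from Lemma \ref{fibo12}, the four cases ($e_0,e_*\notin M$; $e_0\in M$ only; $e_*\in M$ only; $e_0,e_*\in M$) use available sub\-paths of $A_1$ with $\tfrac{n}{2}-1,\ \tfrac{n}{2}-2,\ \tfrac{n}{2}-2,\ \tfrac{n}{2}-3$ edges and yield matchings of total sizes $2k,\ 2k+1,\ 2k+1,\ 2k+2$. Weighting by $(-1)^{\deg M}$ and summing produces $\sum_k\binom{\tfrac{n}{2}-k}{k}q^{2k}-2\sum_k\binom{\tfrac{n}{2}-1-k}{k}q^{2k+1}+\sum_k\binom{\tfrac{n}{2}-2-k}{k}q^{2k+2}$, which is \eqref{char1}; comparing with \eqref{q-fibo} rewrites it as $F_{\tfrac{n}{2}}(q^2)-2qF_{\tfrac{n}{2}-1}(q^2)+q^2F_{\tfrac{n}{2}-2}(q^2)$.

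The step I expect to be the main obstacle is the bookkeeping in (b): correctly identifying which end edges of $A_1$ are deleted for each choice of $e_0,e_*$, confirming that $e_0$ and $e_*$ are themselves never adjacent so all four cases genuinely occur, and checking that for the small even $n$ in which an arc degenerates to one or zero edges the degenerate path counts still match the binomials and the index ranges stated in \eqref{char1}. Step (a), by contrast, is robust once one notes that $s$ reverses the orientation of every edge and that the surviving basis monomials commute.
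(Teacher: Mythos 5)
Your proposal is correct and follows essentially the same route as the paper: both identify the $s$-stable matchings, split them according to whether they contain the two axis edges $\overline{1n}$ and $\overline{\tfrac{n}{2},\tfrac{n}{2}+1}$, and count each case by matchings of the remaining arc via $M(p,k)=\binom{p-k}{k}$, yielding the three $q$-Fibonacci contributions. The only (minor, and welcome) difference is that you package the sign uniformly as $(-1)^{\deg M}$ from the edge-orientation reversal, whereas the paper derives the eigenvalues $+1,-1,+1$ case by case from the factors $x_{1n}$ and $x_{\frac{n}{2},\frac{n}{2}+1}$.
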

\begin{proof} By Theorem \ref{lucas} to any element of the basis of $\overline{FK}_{C_n}(n)$ there corresponds a matching in $C_n$.
	Let  $T_1=\{\overline{i_1,i_1+1}, \overline{i_2,i_2+1}\cdots \overline{i_m,i_m+1}\}$ be a matching of the set of line segments $S_1=\{\overline{12}, \overline{23}, \cdots,\overline{\frac{n}{2}-1, \frac{n}{2}}\}$ in $C_n$ defined in Definition \ref{matching''}, with its corresponding monomial 	\begin{displaymath}
	u=x_{i_1,i_1+1}x_{i_2,i_2+1}\cdots  x_{i_m,i_m+1}\in B[\overline{FK}_{C_n}(n)].
	\end{displaymath} Let its reflection in $s$, itself a matching (as $n=$even), be
	\begin{displaymath}T'_1=\{\overline{j_1,j_1+1}, \overline{j_2,j_2+1}\cdots \overline{j_m,j_m+1}\},\end{displaymath} and its corresponding monomial be $u'=x_{j_1j_1+1}x_{j_2j_2+1}\cdots $ $\cdots x_{j_mj_m+1}$.
	
	 While the matching $T_1$ is not symmetric about $s$, however matching $T_1\cup T'_1$ is symmetric and its corresponding monomial $uu' \in B[\overline{FK}_{C_n}(n)]$ forms an eigenvector of $s$ with eigenvalue $+1$ as $s(uu')=uu'$. The reason $uu'$ is an eigenvector of $s$ is that 
	 \begin{displaymath}
	 s(uu')=s(usu)=(su)s^2u=(su)u=u'u\to uu'. 
	 \end{displaymath} The last step in the above is by reduction w.r.t. degree $2$ terms in the Gr\"obner basis \eqref{gb'}, because $u$ and $u'$ commute, as there is no common index among variables in $uu'$.	 
	  The number of such eigenvectors in $\overline{FK}_{C_n}(n)$ equals the number of matchings $T_1\cup T_1'$ in $C_n$ which by construction equals the number of matchings $T_1$ in the set $\{\overline{12}, \overline{23}, \cdots,\overline{\frac{n}{2}-1,\frac{n}{2}}\}$. However the number of matchings in $\{\overline{12}, \overline{23}, \cdots,\overline{\frac{n}{2}-1,\frac{n}{2}}\}$ by Equations \eqref{chi5'} and \eqref{q-fibo} is equal to 
	\begin{equation}\label{chi1'}
	[M(\frac{n}{2})](q)=F_\frac{n}{2}(q)=\sum_{k=0}^{\lfloor\frac{n}{4}\rfloor}{\frac{n}{2}-k\choose k}q^{2k},
	\end{equation} where the factor $2$ in the power of $q^{2k}$ is to take care of the double number of sides in $T_1\cup T_1'$ compared to $T_1$ and it reflects the even number of matchings in $T_1\cup T_1'$. Hence our first contribution to $\chi(s)$ denoted $\chi_1(s)$ is 
	\begin{equation}\label{chi1}
	[\chi_1^{}(s)](q)=\sum_{k=0}^{\lfloor\frac{n}{4}\rfloor}{\frac{n}{2}-k\choose k}q^{2k}.
	\end{equation} 
	To find the number of eigenvectors with eigenvalue $-1$  instead of considering the matching $T_1\subset\{\overline{12}, \overline{23}, \cdots,\overline{\frac{n}{2}-1,\frac{n}{2}}\}$ and its reflection $T'_1$ in $s$, we need to consider the following cases
	\begin{enumerate}
		\item The matching $\{\overline{1n}\}\cup T_2$, with matching $T_2\subset\{\overline{23},\cdots, \overline{\frac{n}{2}-1,\frac{n}{2}}\}$ (to prevent developing common vertex with $\{\overline{1n}\}$), and its reflection $T_2'$ in $s$, with the corresponding monomials $v, v'\in B[\overline{FK}_{C_n}(n)]$, to $T_2$ and $T'_2$ respectively.
		\item The matching 		  
		$\{\overline{\frac{n}{2}, \frac{n}{2}+1}\}\cup T_3$ with matching $T_3 \subset\{\overline{12},\cdots, \overline{\frac{n}{2}-2,\frac{n}{2}-1}\}$ (to prevent developing common vertex with $\{\overline{\frac{n}{2}, \frac{n}{2}+1}\}$)
		and its reflection $T_3'$ in $s$.
	\end{enumerate}In case $1$ while the  matching $\{\overline{1n}\}\cup T_2$ is not symmetric about $s$, however $\{\overline{1n}\}\cup T_2\cup T_2'$ is a matching in $C_n$ symmetric about $s$ and its corresponding monomial $x_{1n}vv'$ forms an eigenvector of $s$ with eigenvalue $-1$ as $s(x_{1n}vv')=-x_{1n}s(vv')=-x_{1n}vv'$ as before.

 By construction the number of matchings $\{\overline{1n}\}\cup T_2\cup T_2'$ in $C_n$ equals the number of matchings  $T_2$ in $\{\overline{23},\cdots, \overline{\frac{n}{2}-1,\frac{n}{2}}\}$ which is by Equations \eqref{chi5'} and \eqref{q-fibo} equal to
	\begin{equation}\label{chi2'}
	[M(\frac{n}{2}-1)](q)=F_{\frac{n}{2}-1}(q)=\sum_{k=0}^{\lfloor\frac{\frac{n}{2}-1}{2}\rfloor}{\frac{n}{2}-1-k\choose k}q^{2k+1}.
	\end{equation} Here the odd power of $q^{2k+1}$ reflects the odd size of the matching $\{\overline{1n}\}\cup T_2\cup T_2'$ in $C_n$ as well the negative eigenvalue of $s$.		  
	Considering a factor $2$ to cover the $2$nd case (as it exactly results to the same as case $1$), as well a negative sign for the negative eigenvalue, we come up with our second contribution to $\chi(s)$ 
	\begin{equation}\label{chi2}
	[\chi_2^{}(s)](q)=-2\sum_{k=0}^{\lfloor\frac{\frac{n}{2}-1}{2}\rfloor}{\frac{n}{2}-1-k\choose k}q^{2k+1}.
	\end{equation} 
	To cover the last contribution to $\chi(s)$, we consider the matching 
	\begin{displaymath}
\{\overline{1n}\}\cup\{\frac{n}{2}, \frac{n}{2}+1\}\cup T_4,
\end{displaymath} and $T'_4$, the reflection of $T_4$ in $s$, with matching $T_4\subset\{\overline{23},\overline{34},\cdots, \overline{\frac{n}{2}-2,\frac{n}{2}-1}\}$ (to prevent developing common vertex with $\overline{1n}$ or $\overline{\frac{n}{2}, \frac{n}{2}+1}~)$ with $w$ and $w'$ the corresponding monomials to $T_4$ and $T_4'$ respectively.
	By similar arguments we have eigenvectors with eigenvalues $+1$, with the number of such eigenvectors equal to the number of matchings
	$T_4\subset\{\overline{23},\overline{34}, \cdots,\overline{\frac{n}{2}-2,\frac{n}{2}-1}\}$ as before,
	which is by Equations \eqref{chi5'} and \eqref{q-fibo} equal to
	\begin{equation}\label{chi3'}
	[M(\frac{n}{2}-2)](q)=F_{\frac{n}{2}-2}(q)=\sum_{k=0}^{\lfloor\frac{\frac{n}{2}-2}{2}\rfloor}{\frac{n}{2}-2-k\choose k}q^{2k+2}.
	\end{equation} Hence our last contribution to $\chi(s)$ is 
	\begin{equation}\label{chi3}
	[\chi_3^{}(s)](q)=\sum_{k=0}^{\lfloor\frac{\frac{n}{2}-2}{2}\rfloor}{\frac{n}{2}-2-k\choose k}q^{2k+2}.
	\end{equation} Now adding the contributions in Equations \eqref{chi1},
	\eqref{chi2} and \eqref{chi3} we come up with 
	\begin{displaymath}
	[\chi(s)](q)=\sum_{k=0}^{\lfloor\frac{n}{4}\rfloor}{\frac{n}{2}-k\choose k}q^{2k}-2\sum_{k=0}^{\lfloor\frac{\frac{n}{2}-1}{2}\rfloor}{\frac{n}{2}-1-k\choose k}q^{2k+1}+\sum_{k=0}^{\lfloor\frac{\frac{n}{2}-2}{2}\rfloor}{\frac{n}{2}-2-k\choose k}q^{2k+2},
	\end{displaymath} which is written in the following $q$-Fibonacci form using Equation $\eqref{q-fibo}$: 	\begin{displaymath}
	[\chi(s)](q)=F_{\frac{n}{2}}(q^2)-2qF_{\frac{n}{2}-1}(q^2)
	+q^2F_{\frac{n}{2}-2}(q^2).	\end{displaymath} This completes the proof.
\end{proof}	  
\begin{Ex}
	For $n=6$ we have $[\chi(s)](q)=F_3(q^2)-2qF_2(q^2)
	+q^2F_1(q^2)$.\\ Let $q=1$, then~ $\chi(s)\to F_3-2F_2+F_1=3-2(2)+1=0$.
\end{Ex}
\begin{Lemma}\label{char22}
	For even $n$, Let $s, r\in D_n$, realized in permutation group $S_n$ as\\ $s=(1,n)(2, n-1)\cdots (\frac{n}{2}, \frac{n}{2}+1)$ and $r=(1,2,\cdots,n)$ in an $n$-cycle graph $C_n$ . Then
	\begin{equation}\label{char3}
	[\chi(sr)](q)=\sum_{k=0}^{\lfloor\frac{\frac{n}{2}-1}{2}\rfloor}{\frac{n}{2}-1-k\choose k}q^{2k}, ~n\geq 4,
	\end{equation}
	and in $q$-Fibonacci form according to total degree decomposition we have:
	\begin{equation}[\chi(sr)](q)=F_{\frac{n}{2}-1}(q^2),~n\geq 4,
	\end{equation}
	where $\chi(sr)$ is the trace of the representation of $sr$ over $\overline{FK}_{C_n}(n)$.	
\end{Lemma}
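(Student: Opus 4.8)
The plan is to run the matching/eigenvector count of Lemma \ref{evens}, now for the reflection $sr$ in place of $s$, while paying attention to one structural difference. First I would identify $sr$ as a permutation: since $s(i)=n+1-i$ and $r$ is the $n$-cycle, one gets $(sr)(i)\equiv n-i\pmod n$, so $sr$ is the reflection of $C_n$ about the axis through the two antipodal \emph{vertices} $\frac{n}{2}$ and $n$, whereas $s$ fixed no vertex and bisected the two edges $\overline{1n}$ and $\overline{\frac{n}{2},\frac{n}{2}+1}$. (This is exactly why $sr$ and $s$ are not $D_n$-conjugate for even $n$, and why the answer below will lack the alternating signs of \eqref{char1}.) By Theorem \ref{lucas} a basis monomial of $\overline{FK}_{C_n}(n)$ is a matching $M$ in $C_n$, and by Remark \ref{char-cal} the number $\chi(sr)$ is the sum, over basis monomials $z$, of the coefficient of $z$ in $sr(z)$. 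Since every reduction toward the basis \eqref{b} uses only the sign-free commutation relations, this coefficient is $0$ unless $M$ is symmetric about the axis of $sr$, in which case $sr(z)$ equals $\pm z$ after reduction.

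Next I would classify the symmetric matchings. The reflection $sr$ interchanges the two edges incident to each of its fixed vertices, so, since a matching uses at most one edge at a vertex, a symmetric matching must omit all four edges $\overline{1n}$, $\overline{n-1,n}$, $\overline{\frac{n}{2}-1,\frac{n}{2}}$, $\overline{\frac{n}{2},\frac{n}{2}+1}$. Deleting these four edges splits $C_n$ into two arcs, each a chain of $\frac{n}{2}-2$ line segments, which $sr$ interchanges. Hence a symmetric matching is precisely a matching $T$ of one arc $\{\overline{12},\overline{23},\dots,\overline{\frac{n}{2}-2,\frac{n}{2}-1}\}$ together with its $sr$-image in the other arc; in particular it has even size $2k$, where $k=|T|$, and since $sr$ fixes no edge there are no contributions of odd degree and (as checked next) none of eigenvalue $-1$.

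Then I would pin down the sign. For $1\le i\le \frac{n}{2}-2$ the rule \eqref{sigmax} yields $sr(x_{i,i+1})=-x_{n-i-1,n-i}$, and $sr$ carries the paired variable back with a second minus sign, so each of the $k$ swapped pairs in a symmetric matching contributes $(-1)(-1)=+1$; the ensuing reordering into the form \eqref{b} is sign-free, so $sr(z)=z$. Thus every symmetric matching is a $+1$-eigenvector of $sr$, and
\[
[\chi(sr)](q)=\sum_{k\ge 0} M\!\left(\tfrac{n}{2}-1,\,k\right)q^{2k},
\]
where $M(\tfrac{n}{2}-1,k)$ is the number of size-$k$ matchings of a chain of $\frac{n}{2}-2$ line segments. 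By Lemma \ref{fibo12}, $M(\tfrac{n}{2}-1,k)=\binom{\frac{n}{2}-1-k}{k}$, so by \eqref{q-fibo} this equals $\sum_{k=0}^{\lfloor\frac{\frac{n}{2}-1}{2}\rfloor}\binom{\frac{n}{2}-1-k}{k}q^{2k}=F_{\frac{n}{2}-1}(q^2)$, as claimed; the case $n=4$, where all edges are ``special'' and only the empty matching survives, gives the degenerate value $F_{1}(q^2)=1$.

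The main obstacle, as in Lemma \ref{evens}, is the sign bookkeeping forced by the convention $x_{ij}=-x_{ji}$: one has to confirm that, in contrast to $s$ (whose fixed edge $\overline{1n}$ produced genuine $-1$-eigenvectors and hence the alternating formula \eqref{char1}), the vertex-type reflection $sr$ fixes no edge of $C_n$, so each swapped pair contributes $+1$ and no cancellation between the $\pm1$-eigenspaces occurs; the only further care needed is the boundary of the two-arc decomposition for the smallest values of $n$.
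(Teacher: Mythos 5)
Your argument is correct and follows essentially the same route as the paper: identify $sr$ as the vertex-fixing reflection, observe that the symmetric matchings are exactly the pairs $T\cup T'$ with $T$ a matching of the arc $\{\overline{12},\dots,\overline{\frac{n}{2}-2,\frac{n}{2}-1}\}$ of $\frac{n}{2}-2$ segments, and count them via $M(\frac{n}{2}-1,k)=\binom{\frac{n}{2}-1-k}{k}$ to get $F_{\frac{n}{2}-1}(q^2)$. Your explicit exclusion of the four edges at the fixed vertices and the $(-1)(-1)=+1$ sign check are details the paper compresses into ``as before,'' but the substance is identical.
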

\begin{proof}
	Here we have
	\begin{displaymath}
 sr=(n)(1,n-1)(2,n-2)\cdots (\frac{n}{2}-1,\frac{n}{2}+1)(\frac{n}{2}).
 \end{displaymath} Therefore the only matchings in the $n$-cycle $C_n$ symmetric about $sr$ are $T\cup T'$ with matching $T\subset\{\overline{12},\overline{23},\cdots, \overline{\frac{n}{2}-2,\frac{n}{2}-1}\}$ and $T'$, the reflection of $T$ in $sr$. The corresponding monomials to $T$ and $T'$ are respectively $w$ and $w'$. 
 
 we have eigenvectors of $sr$ with eigenvalue $+1$, as $sr(ww')=ww'$ as before. The number of such eigenvectors equals the number of matchings $T\cup T'$ in $C_n$ which equals the number of matchings $T\subset\{\overline{12},\overline{23},\cdots, \overline{\frac{n}{2}-2,\frac{n}{2}-1}\}$ which is by Equations \eqref{chi5'} and \eqref{q-fibo} equal to
	\begin{displaymath}[\chi(sr)](q)=M(\frac{n}{2}-1)(q)=F_{\frac{n}{2}-1}(q)=\sum_{k=0}^{\lfloor\frac{\frac{n}{2}-1}{2}\rfloor}{\frac{n}{2}-1-k\choose k}q^{2k}.
	\end{displaymath}
	This completes the proof.	
\end{proof}
\begin{Ex}
	For $n=6$, we have $\chi(sr)=\sum_{k=0}^1{2-k\choose 1}q^{2k}={2\choose 1}+{1\choose 1}q^2=1+q^2$.
\end{Ex}
\begin{Lemma}\label{odds}
	For odd $n$, Let $s\in D_n$ realized in permutation group $S_n$ as\\ $s=(1,n)(2, n-1)\cdots (\frac{n-1}{2}, \frac{n+3}{2})(\frac{n+1}{2})$ in an $n$-cycle graph $C_n$. Then
	\begin{equation}\label{char2}
	[\chi(s)](q)=\sum_{k=0}^{\lfloor\frac{n-1}{4}\rfloor}{\frac{n-1}{2}-k\choose k}q^{2k}-\sum_{k=0}^{\lfloor\frac{n-3}{4}\rfloor}{\frac{n-3}{2}-k\choose k}q^{2k+1},
	\end{equation}
	with $q$-Fibonacci version
	\begin{displaymath}
	[\chi(s)](q)=F_\frac{n-1}{2}(q^2)-qF_\frac{n-3}{2}(q^2),
	\end{displaymath}
	where $\chi(s)$ is the trace of the representation of $s$ over $\overline{FK}_{C_n}(n)$.
\end{Lemma}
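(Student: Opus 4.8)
The plan is to mimic the proof of Lemma~\ref{evens}, adjusting for the geometry of a reflection of an \emph{odd} cycle, where the axis passes through a vertex and the midpoint of the opposite edge $\overline{1n}$. By Theorem~\ref{lucas} the basis $B[\overline{FK}_{C_n}(n)]$ is indexed by the matchings of $C_n$, and by Remark~\ref{char-cal} the quantity $[\chi(s)](q)$ is the sum, over all basis monomials $z$, of the coefficient of $z$ in $s(z)$, graded by total degree. Since $s$ realized in $S_n$ permutes the edges of $C_n$, $s(z)$ is $\pm$ a monomial in the variables of the matching $s(M)$; and because the edges of any matching are pairwise non-adjacent, the corresponding variables commute pairwise in $\overline{FK}_{C_n}(n)$ (the applicable relations $x_{m,m+1}x_{l,l+1}-x_{l,l+1}x_{m,m+1}$ and $x_{m,m+1}x_{1n}-x_{1n}x_{m,m+1}$ in \eqref{gb'} are exactly those for non-adjacent edges). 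Hence reducing $s(z)$ to standard form merely reorders factors and contributes no sign, so $z$ contributes to $[\chi(s)](q)$ only when $M$ is $s$-symmetric, in which case it contributes $\varepsilon(M)\,q^{|M|}$, where $\varepsilon(M)$ is the product of the sign changes in \eqref{sigmax} picked up by the individual variables.

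First I would record the geometry of $s$: it fixes the vertex $\tfrac{n+1}{2}$ and carries the edge $\overline{1n}$ to itself (reversing its orientation), while the remaining $n-1$ edges split into $\tfrac{n-1}{2}$ reflected pairs $\{\overline{m,m+1},\overline{n-m,n+1-m}\}$. A direct computation from \eqref{sigmax} gives $s\,x_{1n}=-x_{1n}$, whereas within a reflected pair $s\,x_{m,m+1}=-x_{n-m,n+1-m}$ and $s\,x_{n-m,n+1-m}=-x_{m,m+1}$, so the two minus signs cancel; therefore $\varepsilon(M)=-1$ when $\overline{1n}\in M$ and $\varepsilon(M)=+1$ otherwise. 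I would also observe that the two edges $\overline{\tfrac{n-1}{2},\tfrac{n+1}{2}}$ and $\overline{\tfrac{n+1}{2},\tfrac{n+3}{2}}$ incident to the fixed vertex are swapped by $s$ and share that vertex, hence cannot both lie in a matching, so they never occur in an $s$-symmetric matching.

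Next I would split the $s$-symmetric matchings into the two natural families. Those not containing $\overline{1n}$ have the form $T\cup s(T)$, where $T$ is a matching of the line segments $\{\overline{12},\dots,\overline{\tfrac{n-3}{2},\tfrac{n-1}{2}}\}$, a copy of $V_{\frac n2}$-type set $V_{\frac{n-1}{2}}$; the corresponding monomial is an eigenvector of $s$ with eigenvalue $+1$ and total degree $2|T|$, so by Lemmas~\ref{fibo1} and \ref{fibo12} and \eqref{q-fibo} this family contributes $\sum_{k}\binom{\frac{n-1}{2}-k}{k}q^{2k}=F_{\frac{n-1}{2}}(q^2)$, with upper limit $\lfloor\tfrac{n-1}{4}\rfloor$. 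Those containing $\overline{1n}$ have the form $\{\overline{1n}\}\cup T\cup s(T)$, where $T$ is a matching of $\{\overline{23},\dots,\overline{\tfrac{n-3}{2},\tfrac{n-1}{2}}\}$, a copy of $V_{\frac{n-3}{2}}$ (the endpoints $1$ and $n$ must be avoided); the monomial is an eigenvector with eigenvalue $-1$ and degree $1+2|T|$, contributing $-\sum_{k}\binom{\frac{n-3}{2}-k}{k}q^{2k+1}=-q\,F_{\frac{n-3}{2}}(q^2)$, with upper limit $\lfloor\tfrac{n-3}{4}\rfloor$. Adding the two pieces yields both the binomial form \eqref{char2} and its $q$-Fibonacci version.

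The main obstacle, exactly as in the even case, is the bookkeeping: verifying that every variable occurring in a matching commutes with every other (so that reducing $s(z)$ introduces no sign), that the minus signs within each reflected pair cancel while $\overline{1n}$ leaves a lone minus, and that the axis-incident pair is automatically excluded from symmetric matchings. Once these points are checked, the enumeration collapses to the Fibonacci-type matching counts of Lemmas~\ref{fibo1}–\ref{fibo12}, and identification with $F_{\frac{n-1}{2}}(q^2)-q\,F_{\frac{n-3}{2}}(q^2)$ via \eqref{q-fibo} is immediate; I would also sanity-check the formula against $n=5$ and $n=7$.
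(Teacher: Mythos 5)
Your proposal is correct and follows essentially the same route as the paper: split the $s$-symmetric matchings into those of the form $T\cup s(T)$ (eigenvalue $+1$, counted by $F_{\frac{n-1}{2}}(q^2)$) and those of the form $\{\overline{1n}\}\cup T\cup s(T)$ (eigenvalue $-1$, counted by $qF_{\frac{n-3}{2}}(q^2)$), then add. Your treatment is in fact slightly more careful than the paper's, since you explicitly verify the sign cancellation within reflected pairs, the exclusion of the two edges at the fixed vertex, and the exhaustiveness of the two families.
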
	
\begin{proof}
	Let
	\begin{displaymath}	
	T_1=\{\overline{i_1,i_1+1}, \overline{i_2,i_2+1}\cdots \overline{i_m,i_m+1}\}
	\end{displaymath} be a matching of the set of line segments
	\begin{displaymath}
 S=\{\overline{12}, \overline{23}, \cdots,\overline{\frac{n-1}{2}-1, \frac{n-1}{2}}\},
 \end{displaymath} with its corresponding monomial 
	\begin{displaymath}
	u=x_{i_1,i_1+1}x_{i_2,i_2+1}\cdots  x_{i_m,i_m+1}.
	\end{displaymath} Let its reflection in $s$, itself a matching, be 	
	\begin{displaymath}T'_1=\{\overline{j_1,j_1+1}, \overline{j_2,j_2+1}\cdots \overline{j_m,j_m+1}\},\end{displaymath} and its corresponding monomial be
	\begin{displaymath}
 u'=x_{j_1j_1+1}x_{j_2j_2+1}\cdots x_{j_mj_m+1}.
 \end{displaymath}
	
	While the matching $T_1$ is not symmetric about $s$, however matching $T_1\cup T'_1$ is so, and its corresponding monomial $uu'$ forms an eigenvector of $s$ with eigenvalue $+1$, as $s(uu')=uu'$ as before. 
	
	The number of such eigenvectors equals the number of the matchings $T_1\cup T_1'$ in $C_n$ which equals the number of matchings $T_1$ in $\{\overline{12}, \overline{23}, \cdots,\overline{\frac{n-1}{2}-1,\frac{n-1}{2}}\}$ which is by \eqref{chi5'} and \eqref{q-fibo} equal to
	\begin{equation}\label{char2'}
	[M(\frac{n-1}{2})](q)=F_\frac{n-1}{2}(q)=\sum_{k=0}^{\lfloor\frac{n-1}{4}\rfloor}{\frac{n-1}{2}-k\choose k}q^{2k},
	\end{equation} where the factor $2$ in the power of $q^{2k}$ as before counts the double number of sides (double size$=$double degree). Hence our first contribution to $\chi(s)$ is 
	\begin{equation}\label{char3_''}
	[\chi_1^{}(s)](q)=\sum_{k=0}^{\lfloor\frac{n-1}{4}\rfloor}{\frac{n-1}{2}-k\choose k}q^{2k}.
	\end{equation}
	We have another set of matchings symmetric about $s$. These matchings are made by adding the  $\overline{1n}$ (its corresponding variable $x_{1n}$), to avoid common vertex with $\overline{1n}$, only to each matching 
	\begin{displaymath}
T_2\subset\Bigg\{\overline{23},\overline{34}, \cdots,\overline{\frac{n-1}{2}-1,\frac{n-1}{2}}\Bigg\}.	
\end{displaymath} Let the reflection of $T_2$ in $s$ be $T_2'$, then $\{\overline{1n}\}\cup T_2\cup T_2'$ is symmetric about $s$ and its correspondent monomial $x_{12}ww'$ is an eigenvector of $s$ with eigenvalue $-1$ as $s(x_{1n}ww')=-x_{12}s(ww')=-x_{12}ww'$ as before.

 The number of such eigenvectors is equal to the number of matchings $\{\overline{1n}\}\cup T_2\cup T_2'$ in $C_n$ which is equal to the number of matchings $T_2\subset\{\overline{23},\overline{34}, \cdots,\overline{\frac{n-1}{2}-1,\frac{n-1}{2}}\}$ which by Equations \eqref{chi5'} and \eqref{q-fibo} is equal to
	\begin{displaymath}
 M(\frac{n-1}{2}-1)=M(\frac{n-3}{2})=F_\frac{n-3}{2}=\sum_{k=0}^{\lfloor\frac{n-3}{4}\rfloor}{\frac{n-3}{2}-k\choose k}q^{2k+1}.
\end{displaymath}	
	Hence our second contribution to $\chi(s)$ is
	\begin{equation}\label{char2''}
	[\chi_2^{}(s)](q)=-\sum_{k=0}^{\lfloor\frac{n-3}{4}\rfloor}{\frac{n-3}{2}-k\choose k}q^{2k+1}.
	\end{equation} Adding Equations \eqref{char3_''} and \eqref{char2''} we come up with
	\begin{displaymath}
	[\chi(s)](q)=\sum_{k=0}^{\lfloor\frac{n-1}{4}\rfloor}{\frac{n-1}{2}-k\choose k}q^{2k}-\sum_{k=0}^{\lfloor\frac{n-3}{4}\rfloor}{\frac{n-3}{2}-k\choose k}q^{2k+1},
	\end{displaymath}
	with its $q$-Fibonacci form according to total degree decomposition 
	\begin{displaymath}
	[\chi(s)](q)=F_\frac{n-1}{2}(q^2)-qF_\frac{n-3}{2}(q^2).
	\end{displaymath}
	This completes the proof.
\end{proof}
\begin{Ex}
	For $n=5$ we have $[\chi(s)](q)=\sum_{k=0}^1{2-k\choose k}q^{2k}-\sum_{k=0}^0{1-k\choose k}q^{2k+1}={2\choose 0}+{1\choose 1}q^2-{1\choose 0}q=1-q+q^2$.\\Let $q=1$, then $\chi(s)=1$.
\end{Ex}
\begin{Rem}
	For odd $n$, there is no symmetry about $sr$ in $C_n$, so we do not discuss the case of $\chi(sr)$. 
\end{Rem}
\begin{table}\begin{center}\caption{Character table of $D_n$ for even $n$. The trivial representation and the three reflections are ~$1$-dimensional, the rest are $2$-dimensional rotations.}
		\label{ct-even}
		\begin{tabular}
			{|c|c|c|c|c|c|c|}
			\hline
			&$\epsilon$&$[r]$&$\vtop{\hbox{\strut$[r^p],~~ p|n$,}\hbox{\strut$2\leq p\leq\frac{n}{2}-1$}}$&$[r^\frac{n}{2}]$&$[s]$&$[sr]$\\
			\hline
			&$\vtop{\hbox{\strut$|c_{1^n}|$}\hbox{\strut$=1$}}$&$\vtop{\hbox{\strut$|c_n|$}\hbox{\strut$=2$}}$&$|c_{(\frac{n}{p})^p}|=2$&$\vtop{\hbox{\strut$|c_{2^\frac{n}{2}}|$}\hbox{\strut$=1$}}$&$\vtop{\hbox{\strut$|c_{2^{\frac{n}{2}'}}|$}\hbox{\strut$=\frac{n}{2}$}}$&$\vtop{\hbox{\strut$|c_{2^{(\frac{n}{2}-1)}11}|$}\hbox{\strut$=\frac{n}{2}$}}$\\
			\hline
			$\chi^{}_{1^n}$&$1$&$1$&$1$&$1$&$1$&$1$\\
			\hline 
			$\chi_{2^{\frac{n}{2}}}$&$1$&$1$&$1$&$1$&$-1$&$-1$\\
			\hline 
			$\chi_{2^{\frac{n}{2}'}}$&$1$&$-1$&$(-1)^p$&$(-1)^{\frac{n}{2}}$&$1$&$-1$\\
			\hline 
			$\chi_{2^{(\frac{n}{2}-1)}11}$&$1$&$-1$&$(-1)^p$&$(-1)^{\frac{n}{2}}$&$-1$&$1$\\
			\hline 
			\vtop{\hbox{\strut $\chi_{(\frac{n}{h})^h},~~h|n,$}\hbox{\strut $1\leq h\leq\frac{n}{2}-1$}} &$2$&$2cos\frac{2h\pi}{n}$&$2cos\frac{2hp\pi}{n}$&$2(-1)^h$&$0$&$0$\\
			\hline		
	\end{tabular}\end{center}
\end{table}
\section{Character of $\overline{FK}_{C_n}(n)$ over $D_n$ and decomposition}
We summarize the results of Lemmas \ref{char11}, \ref{evens}, and \ref{char22} in Equations \eqref{ceven1} and \eqref{ceven1f}. 
\begin{Rem}
	Table \ref{ct-even}, and Equation \eqref{ceven1}, are character table of $D_n$, and character of $\overline{FK}_{C_n}(n)$ for even $n$, with its $q$-Fibonacci form Equation \eqref{ceven1f}. In table \ref{ct-even}, $[x]$ stands the conjugacy class represented by $x$, and the $D_n$ conjugacy classes and the characters of the irreducible representations of $D_n$ are indexed by the cycle type of the associated $D_n$ conjugacy classes. 
\subsection{Character of $\overline{FK}_{C_n}(n)$ over $D_n$ for even $n$}\label{chareven}
\begin{equation}\label{ceven1}\begin{split}
char_{D_n}&[\overline{FK}_{C_n}(n=\text{even})]=\Bigg(\sum_{k=0}^{\frac{n}{2}}\frac{n}{n-k}{n-k\choose k}q^k,~ 1,~ 1+pq^\frac{n}{p}\delta_{\frac{n}{p},\text{integer}},~ 1+\frac{n}{2}q^2,\\&\sum_{k=0}^{\lfloor\frac{n}{4}\rfloor}{\frac{n}{2}-k\choose k}q^{2k}-2\sum_{k=0}^{\lfloor\frac{\frac{n}{2}-1}{2}\rfloor}{\frac{n}{2}-1-k\choose k}q^{2k+1}+\sum_{k=0}^{\lfloor\frac{\frac{n}{2}-2}{2}\rfloor}{\frac{n}{2}-2-k\choose k}q^{2k+2}, \\&\sum_{k=0}^{\lfloor\frac{n-2}{4}\rfloor}{\frac{n-2}{2}-k\choose k}q^{2k}\Bigg).
\end{split}\end{equation}
In terms of $q$-Fibonacci/$q$-Lucas polynomials, with initial values $F_0=1$, $F_1=1$, $L_0=2$, $L_1=1$ we have
\begin{equation}\label{ceven1f}
\begin{split}
char_{D_n}[\overline{FK}_{C_n}(n=\text{even})]=&\Bigg(L_n(q),~1,~1+pq^\frac{n}{p}\delta_{\frac{n}{p},\text{integer}},~1+\frac{n}{2}q^2,\\~~&F_\frac{n}{2}(q^2)-2qF_{\frac{n}{2}-1}(q^2)+q^2F_{\frac{n}{2}-2}(q^2),~F_\frac{n-2}{2}(q^2) \Bigg)
\end{split}
\end{equation}

\end{Rem}
\subsection{Decomposition in irreducible characters}
\begin{Rem} In decomposition of a character $\chi$ in irreducible characters $\chi^{(i)}$ of a group $G$, the coefficients of $\chi^{(i)}$ are derived as in the following formula \cite{sagan}.
	\begin{equation}
	\chi=\sum_im_i\chi^{(i)},~~ m_i=\langle\chi,\chi^{(i)}\rangle=\frac{1}{|G|}\sum_{K}|K|\chi_K^{}\overline{\chi}_K^{(i)},
	\end{equation}where the sum is over conjugacy classes.
\end{Rem}
Applying the above we have
\begin{equation}\label{char5}\begin{split}
char_{D_n}&[\overline{FK}_{C_n}(n=\text{even})]=\\&m_{1^n}\chi_{1^n}+m_{2^{\frac{n}{2}'}}\chi_{2^{\frac{n}{2}'}}+m_{2^{(\frac{n}{2}-1)}11}\chi_{2^{(\frac{n}{2}-1)}11}+m_{2^{\frac{n}{2}}}\chi_{2^{\frac{n}{2}}}+\sum_{1\leq h\leq
	\frac{n}{2}-1,~h|n}m_{(\frac{n}{h})^h}\chi_{(\frac{n}{h})^h},
\end{split}\end{equation}
where the coefficients of the irreducible characters are calculated using character table \ref{ct-even} and either the  character in Equation \eqref{ceven1}, or in terms of $q$-Fibonacci from Equation \eqref{ceven1f}.

By Equation \eqref{ceven1f} we have: 
\begin{equation}\label{coef''}\begin{split}
m_{1^n}&=\frac{1}{2n}\Bigg\{L_n(q)+2+2\sum_{p=2}^{\frac{n}{2}-1}(1+pq^\frac{n}{p}\delta_{\frac{n}{p},\text{integer}})+(1+\frac{n}{2}q^2)\\&+\frac{n}{2}\Big[F_{\frac{n}{2}}(q^2)-2qF_{\frac{n}{2}-1}(q^2)+q^2F_{\frac{n}{2}-2}(q^2)\Big]+\frac{n}{2}F_{\frac{n-2}{2}}(q^2)\Bigg\}.\\
m_{2^{\frac{n}{2}}}&=\frac{1}{2n}\Bigg\{L_n(q)+2+2\sum_{p=2}^{\frac{n}{2}-1}(1+pq^\frac{n}{p}\delta_{\frac{n}{p},\text{integer}})+(1+\frac{n}{2}q^2)\\&-\frac{n}{2}\Big[F_{\frac{n}{2}}(q^2)-2qF_{\frac{n}{2}-1}(q^2)+q^2F_{\frac{n}{2}-2}(q^2)\Big]-\frac{n}{2}F_{\frac{n-2}{2}}(q^2)\Bigg\}.\\
m_{2^{\frac{n}{2}'}}&=
\frac{1}{2n}\Bigg\{L_n(q)-2+2\sum_{p=2}^{\frac{n}{2}-1}(-1)^p(1+pq^\frac{n}{p}\delta_{\frac{n}{p},\text{integer}})+(-1)^{\frac{n}{2}}(1+\frac{n}{2}q^2)\\&+\frac{n}{2}\Bigg[F_\frac{n}{2}(q^2)-2qF_{\frac{n}{2}-1}(q^2)+q^2F_{\frac{n}{2}-2}(q^2)\Bigg]-\frac{n}{2}F_{\frac{n-2}{2}}(q^2)\Bigg\}.\\
m_{2^{(\frac{n}{2}-1)}11}&=
\frac{1}{2n}\Bigg\{L_n(q)-2+2\sum_{p=2}^{\frac{n}{2}-1}(-1)^p(1+pq^\frac{n}{p}\delta_{\frac{n}{p},\text{integer}})+(-1)^{\frac{n}{2}}(1+\frac{n}{2}q^2)\\&-\frac{n}{2}\Bigg[F_{\frac{n}{2}}(q^2)-2qF_{\frac{n}{2}-1}(q^2)+q^2F_{\frac{n}{2}-2}(q^2)\Bigg]+\frac{n}{2}F_{\frac{n-2}{2}}(q^2)\Bigg\}.\\
m_{(\frac{n}{h})^h}&=\frac{1}{2n}\Bigg\{2L_n+4cos\frac{2h\pi}{n}+4\sum_{p=2}^{\frac{n}{2}-1}(1+pq^\frac{n}{p}\delta_{\frac{n}{p},\text{integer}})cos\frac{2hp\pi}{n}\\&+2(-1)^h(1+\frac{n}{2}q^2)\Bigg\},~1\leq h\leq\frac{n}{2}-1, ~\text{such that}~ h|n.
\end{split}\end{equation}
By Equation \ref{ceven1} we get the following version of coefficients 
\begin{equation}\label{coef}\begin{split}
m_{1^n}&=\frac{1}{2n}\Bigg\{\sum_{k=0}^{\frac{n}{2}}\frac{n}{n-k}{n-k\choose k}q^k+2+2\sum_{p=2}^{\frac{n}{2}-1}(1+pq^\frac{n}{p}\delta_{\frac{n}{p},\text{integer}})+(1+\frac{n}{2}q^2)\\&+\frac{n}{2}\Bigg[\sum_{k=0}^{\lfloor\frac{n}{4}\rfloor}{\frac{n}{2}-k\choose k}q^{2k}-2\sum_{k=0}^{\lfloor\frac{\frac{n}{2}-1}{2}\rfloor}{\frac{n}{2}-1-k\choose k}q^{2k+1}+\sum_{k=0}^{\lfloor\frac{\frac{n}{2}-2}{2}\rfloor}{\frac{n}{2}-2-k\choose k}q^{2k+2}\Bigg]\\&+\frac{n}{2}\sum_{k=0}^{\lfloor\frac{n-2}{4}\rfloor}
{\frac{n-2}{2}-k\choose k}q^{2k}\Bigg\}.\\
m_{2^{\frac{n}{2}}}&=
\frac{1}{2n}\Bigg\{\sum_{k=0}^{\frac{n}{2}}\frac{n}{n-k}{n-k\choose k}q^k+2+2\sum_{p=2}^{\frac{n}{2}-1}(1+pq^\frac{n}{p}\delta_{\frac{n}{p},\text{integer}})+(1+\frac{n}{2}q^2)\\&-\frac{n}{2}\Bigg[\sum_{k=0}^{\lfloor\frac{n}{4}\rfloor}{\frac{n}{2}-k\choose k}q^{2k}-2\sum_{k=0}^{\lfloor\frac{\frac{n}{2}-1}{2}\rfloor}{\frac{n}{2}-1-k\choose k}q^{2k+1}+\sum_{k=0}^{\lfloor\frac{\frac{n}{2}-2}{2}\rfloor}{\frac{n}{2}-2-k\choose k}q^{2k+2}\Bigg]\\&-\frac{n}{2}\sum_{k=0}^{\lfloor\frac{n-2}{4}\rfloor}
{\frac{n-2}{2}-k\choose k}q^{2k}\Bigg\}.\\
m_{2^{\frac{n}{2}'}}&=
\frac{1}{2n}\Bigg\{\sum_{k=0}^{\frac{n}{2}}\frac{n}{n-k}{n-k\choose k}q^k-2+2\sum_{p=2}^{\frac{n}{2}-1}(-1)^p(1+pq^\frac{n}{p}\delta_{\frac{n}{p},\text{integer}})+(-1)^{\frac{n}{2}}(1+\frac{n}{2}q^2)\\&+\frac{n}{2}\Bigg[\sum_{k=0}^{\lfloor\frac{n}{4}\rfloor}{\frac{n}{2}-k\choose k}q^{2k}-2\sum_{k=0}^{\lfloor\frac{\frac{n}{2}-1}{2}\rfloor}{\frac{n}{2}-1-k\choose k}q^{2k+1}+\sum_{k=0}^{\lfloor\frac{\frac{n}{2}-2}{2}\rfloor}{\frac{n}{2}-2-k\choose k}q^{2k+2}\Bigg]\\&-\frac{n}{2}\sum_{k=0}^{\lfloor\frac{n-2}{4}\rfloor}
{\frac{n-2}{2}-k\choose k}q^{2k}\Bigg\}.\\
m_{2^{(\frac{n}{2}-1)}11}&=
\frac{1}{2n}\Bigg\{\sum_{k=0}^{\frac{n}{2}}\frac{n}{n-k}{n-k\choose k}q^k-2+2\sum_{p=2}^{\frac{n}{2}-1}(-1)^p(1+pq^\frac{n}{p}\delta_{\frac{n}{p},\text{integer}})+(-1)^{\frac{n}{2}}(1+\frac{n}{2}q^2)\\&-\frac{n}{2}\Bigg[\sum_{k=0}^{\lfloor\frac{n}{4}\rfloor}{\frac{n}{2}-k\choose k}q^{2k}-2\sum_{k=0}^{\lfloor\frac{\frac{n}{2}-1}{2}\rfloor}{\frac{n}{2}-1-k\choose k}q^{2k+1}+\sum_{k=0}^{\lfloor\frac{\frac{n}{2}-2}{2}\rfloor}{\frac{n}{2}-2-k\choose k}q^{2k+2}\Bigg]\\&+\frac{n}{2}\sum_{k=0}^{\lfloor\frac{n-2}{4}\rfloor}
{\frac{n-2}{2}-k\choose k}q^{2k}\Bigg\}.\\
m_{(\frac{n}{h})^h}&=\frac{1}{2n}\Bigg\{2\sum_{k=0}^{\frac{n}{2}}\frac{n}{n-k}{n-k\choose k}q^k+4cos\frac{2h\pi}{n}+4\sum_{p=2}^{\frac{n}{2}-1}(1+pq^\frac{n}{p}\delta_{\frac{n}{p},\text{integer}})cos\frac{2hp\pi}{n}\\&+2(-1)^h(1+\frac{n}{2}q^2)\Bigg\},~1\leq h\leq\frac{n}{2}-1, ~\text{such that} ~h|n.
\end{split}\end{equation}
\begin{Ex}Let\\ $char_{C_6}[\overline{FK}_{C_6}(6, q)]=m_{1^6}\chi^{}_{1^6}+m_{222}\chi^{}_{222}+m_{222'}\chi^{}_{222'}+m_{2211}\chi^{}_{2211}+m_{6}\chi^{}_{6}+m_{33}\chi^{}_{33}$.
	Then from  Equation \eqref{coef} we have
	\begin{displaymath}
	m_{1^6}=1+2q^2, m_{222}=q+q^3, m_{222'}=q^2,~m_{2211}=q+q^3,~m_6=q+q^2,~m_{33}=q+2q^2.
	\end{displaymath} 
	Then substitution of the above coefficients into the character in Equation \eqref{char5} yields:
	\begin{equation}\label{char_'}\begin{split}char_{D_6}[\overline{FK}_{C_6}(6,~ q)]=&(1+2q^2)\chi^{}_{1^6}+(q+q^3)\chi^{}_{222}+(q^2)\chi^{}_{222'}\\&+(q+q^3)\chi^{}_{2211}+(q+q^2)\chi^{}_{6}+(q+2q^2)\chi^{}_{33}.
	\end{split}\end{equation} 	
	Sorting in $q$ we have the decomposition of $char_{D_6}[\overline{FK}_{C_6}(6)]$ in usual degrees.
	\begin{equation}\label{dec_'}\begin{split}
	char_{D_6}[\overline{FK}_{C_6}(6)]=&\chi^{}_{1^6}+(\chi^{}_{222}+\chi^{}_{2211}+\chi^{}_{6}+\chi^{}_{33})q\\&+(2\chi^{}_{1^6}+\chi^{}_{222'}+\chi^{}_{6}+2\chi^{}_{33})q^2+(\chi^{}_{222}+\chi^{}_{2211})q^3,
	\end{split}\end{equation}
	which upon putting $q=1$ reduces to 	
	\begin{displaymath}
	char_{D_6}\overline{FK}
	_{C_6}(6) =3\chi_{1^6}^{}+2\chi_{222}^{}+\chi_{222'}^{}+2\chi_{2211}^{}+2\chi_{6}^{}+3\chi_{33}^{}.
	\end{displaymath}
\end{Ex}
\subsection{Character of $\overline{FK}_{C_n}(n)$ over $D_n$ for odd $n$ and decomposition}\label{charodd}
We summarize the results of Lemmas \ref{char11} and \ref{char22} in Equations \eqref{codd} and \eqref{codd'} as the character of $\overline{FK}_{C_n}(n)$ over $D_n$ for odd $n$.
\begin{equation}\label{codd}\begin{split}
char_{D_n}[\overline{FK}_{C_n}(n=\text{odd})]=&\Bigg(
\sum_{k=0}^\frac{n-1}{2}\frac{n}{n-k}{n-k\choose k}q^k,~ 1,~ 1+pq^\frac{n}{p}\delta_{\frac{n}{p},\text{integer}},\\&\sum_{k=0}^{\lfloor\frac{n-1}{4}\rfloor}{\frac{n-1}{2}-k\choose k}q^{2k}-\sum_{k=0}^{\lfloor\frac{n-3}{4}\rfloor}{\frac{n-3}{2}-k\choose k}q^{2k+1}\Bigg).
\end{split}\end{equation}
In terms of $q$-Fibonacci/$q$-Lucas polynomials, with initial values $F_0=1$, $F_1=1$, $L_0=2$, $L_1=1$ we have
\begin{equation}\label{codd'}\begin{split}
char_{D_n}[\overline{FK}_{C_n}(n=\text{odd})]=&\Bigg(
L_n(q),~1,~1+pq^\frac{n}{p}\delta_{\frac{n}{p},\text{integer}},~F_\frac{n-1}{2}(q^2)-qF_\frac{n-3}{2}(q^2)
\Bigg).
\end{split}\end{equation}


\subsection{Decomposition in irreducible characters}
\begin{equation}\label{char5'}
char_{D_n}[\overline{FK}_{C_n}(n=\text{odd})]=m_{1^n}\chi^{}_{1^n}+m_{2^{\frac{n-1}{2}}}\chi_{2^{\frac{n-1}{2}}}+\sum_{0<h\leq
	\frac{n-1}{2},~h|n}m_{(\frac{n}{h})^h}\chi^{}_{(\frac{n}{h})^h},
\end{equation}
with the following coefficients calculated as before by using irreducible characters of $D_n$ for odd $n$ in Table \ref{irrchar2} and the character of $\overline{FK}_{C_n}(n)$ either from Equation \eqref{codd} or in terms of $q$-Fibonacci in Equation \eqref{codd'}.
\begin{table}\begin{center}\caption{character table of $D_n$ for odd $n$. The trivial and one reflection, the rest are $2$-dimensional rotations.
		}\label{irrchar2}
		\begin{tabular}
			{|c|c|c|c|c|}
			\hline
			&$\epsilon$&$[r]$&$\vtop{\hbox{\strut$[r^p],~ 2\leq p\leq \frac{n-1}{2}$}\hbox{\strut such that $p|n$}}$&$[s]$\\
			\hline
			&$|c^{}_{1^n}|=1$&$|c_n|=2$&$|c_{(\frac{n}{p})^p}|=2$&$|c_{2^{\frac{n-1}{2}}1}|=n$\\
			\hline
			$\chi^{}_{1^n}$&$1$&$1$&$1$&$1$\\
			\hline
			$\chi_{2^{\frac{n-1}{2}}1}$&$1$&$1$&$1$&$-1$\\
			\hline
			$\vtop{\hbox{\strut$\chi^{}_{(\frac{n}{h})^h}, 1\leq h\leq \frac{n-1}{2}$}\hbox{\strut such that $h|n$}}$
			&$2$&$2cos\frac{2h\pi}{n}$&$2cos\frac{2hp\pi}{n}$&$0$\\
			\hline
		\end{tabular}
	\end{center}
\end{table}

From Equation \ref{codd'} we have 
\begin{equation}\begin{split}
m_{1^n}=&\frac{1}{2n}\Bigg[L_n(q)+2+2\sum_{p=2}^\frac{n-1}{2}(1+pq^{\frac{n}{p}}\delta_{\frac{n}{p},integer})+n\Big(F_{\frac{n-1}{2}}(q^2)-qF_{\frac{n-3}{2}}(q^2)\Big)\Bigg].\\
m_{2^\frac{n-1}{2}1}=&\frac{1}{2n}\Bigg[L_n(q)+2+2\sum_{p=2}^\frac{n-1}{2}(1+pq^{\frac{n}{p}}\delta_{\frac{n}{p},integer})-n\Big(F_{\frac{n-1}{2}}(q^2)-qF_{\frac{n-3}{2}}(q^2)\Big)\Bigg].\\
m_{(\frac{n}{h})^h}=&\frac{1}{2n}\Bigg[2L_n(q)+4cos(\frac{2h\pi}{n})+4\sum_{p=2}^\frac{n-1}{2}(1+pq^{\frac{n}{p}}\delta_{\frac{n}{p},integer})cos(\frac{2hp\pi}{n})\Bigg],\\&\text{where}~1\leq h\leq\frac{n-1}{2},~h|n.
\end{split}\end{equation}
From Equation \ref{codd} we have
\begin{equation}\label{coef2}\begin{split}
m_{1^n}=&\frac{1}{2n}\Bigg\{\sum_{k=0}^\frac{n-1}{2}\frac{n}{n-k}{n-k\choose k}q^k+2+2\sum_{p=2}^{\frac{n-1}{2}}(1+pq^\frac{n}{p}\delta_{\frac{n}{p},\text{integer}})\\&+n\Big[\sum_{k=0}^{\lfloor\frac{n-1}{4}\rfloor}{\frac{n-1}{2}-k\choose k}q^{2k}-\sum_{k=0}^{\lfloor\frac{n-3}{4}\rfloor}{\frac{n-3}{2}-k\choose k}q^{2k+1}\Big]\Bigg\}.\\
m_{2^{\frac{n-1}{2}}1}=&\frac{1}{2n}\Bigg\{\sum_{k=0}^\frac{n-1}{2}\frac{n}{n-k}{n-k\choose k}q^k+2+2\sum_{p=2}^{\frac{n-1}{2}}(1+pq^\frac{n}{p}\delta_{\frac{n}{p},\text{integer}})\\&-n\Big[\sum_{k=0}^{\lfloor\frac{n-1}{4}\rfloor}{\frac{n-1}{2}-k\choose k}q^{2k}-\sum_{k=0}^{\lfloor\frac{n-3}{4}\rfloor}{\frac{n-3}{2}-k\choose k}q^{2k+1}\Big]\Bigg\}.\\
m_{(\frac{n}{h})^h}=&\frac{1}{2n}\Bigg\{2\sum_{k=0}^\frac{n-1}{2}\frac{n}{n-k}{n-k\choose k}q^k+4cos\frac{2h\pi}{n}+4\sum_{p=2}^{\frac{n-1}{2}}(1+pq^\frac{n}{p}\delta_{\frac{n}{p},\text{integer}})cos(\frac{2hp\pi}{n})\Bigg\},\\&\text{where}~1\leq h\leq
\frac{n-1}{2},~h|n.
\end{split}\end{equation} 
\begin{Ex}
	Let $n=5$, then
	\begin{displaymath} char_{D_5}[\overline{FK}_{C_5}(5)]=m_{1^5}\chi_{1^5}^{}+ m_{221}\chi_{221}^{}+m_{5}\chi_{5}^{}+m_{5'}\chi_{5'}^{},\end{displaymath} 
	where the $m$-coefficients calculated from Equation \eqref{coef2} are:
	\begin{displaymath}
		m_{1^5}=1+q,~ m_{221}=q,~m_{5}=q+q^2,~ m_{5'}=q+q^2.
	\end{displaymath}
	 Substituting the coefficients into the character, Equation \eqref{char5'}, we have
	\begin{displaymath} char_{D_5}[\overline{FK}_{C_5}(5)](q)=(1+q^2)\chi^{}_{1^5}+ q\chi^{}_{221}+(q+q^2)\chi^{}_{5}+(q+q^2)\chi^{}_{5'}.\end{displaymath}Sorting the above in terms of $q$ yields the decomposition in usual degree:
	\begin{displaymath} char_{D_5}[\overline{FK}_{C_5}(5)](q)=\chi^{}_{1^5}+ (\chi^{}_{221}+\chi^{}_{5}+\chi^{}_{5'})q+(\chi^{}_{1^5}+\chi^{}_{5}+\chi^{}_{5'})q^2,\end{displaymath} which upon putting $q=1$ reduces to 
	\begin{displaymath}
	char_{D_5}\overline{FK}_{C_5}(5)=2\chi_{1^5}^{}+\chi_{221}^{}+2\chi_{5'}^{}+2\chi_{5}^{}.
	\end{displaymath} 
\end{Ex}
\subsubsection{Representation decomposition of $D_n$ by conjugation class}
Since the basis of $\overline{FK}_{C_n}(n)$ consists of monomials in variables with no common index (alternatively, matchings in an $n$-cycle), the $S_n$-degree of an element of the basis is product of disjoint $2$-cycles and $1$-cycles. So it belongs to the $S_n$-conjugacy class denoted by $t_2^kt_1^{n-2k}$ where $t_2$ and $t_1$ stand for $2$-cycle and $1$-cycle respectively and where $k$ is the degree of monomial. Since we have the same partition of the basis invariant under $S_n$-conjugacy class as under usual degree, we have essentially the same decomposition as by usual degree.
\subsubsection{Representation decomposition of $D_n$ by set partition type} 
Since no two variable appearing in a monomial $M\in B[\overline{FK}_{C_n}(n)]$ have common indexes, in each part of the set partition there is at most $2$ indexes. So the set partition type of degree $k$ monomial $M$ is of the form
\begin{displaymath}
 \alpha=(\underbrace{2, 2, \cdots, 2}_k,\underbrace{1, 1, \cdots, 1}_{n-2k}),
  ~\text{denoted by}~ 2^k1^{n-2k},
  \end{displaymath} where each $2$ in $\alpha$ represent the two indexes of a variable if the variable appears in $M$, while each $1$ represents the index appearing in no variable in $M$.  Since we have the same partition of the basis invariant under set partition type as under usual degree, we have essentially the same decomposition as by usual degree.
\section*{Conclusion}
We introduced a quotient of Fomin-Kirilov algebra $FK(n)$ denoted by $\overline{FK}_{C_n}(n)$ associated with the $n$-cycle subgraph of the complete graph on $n$ vertexes. We found a nice connection between $FK(n)$ and the theory of $q$-Fibonacci/$q$-Lucas polynomials via this quotient; the Hilbert series of this quotient algebra is nothing but the $q$-Lucas polynomial and its dimension equals the Lucas number $L_n$. We analyzed why $D_n$ representation decomposition of this quotient algebra for usual degree would be essentially the same as for $S_n$ degree and set partition degree. 
 \section*{Acknowledgment} I offer my sincere gratitude to Professor Bergeron for reviewing this paper and for his valuable hints on this paper. 

\end{raggedright}
\end{document}